\documentclass[10pt]{amsart}
\usepackage{latexsym}
\usepackage{amsmath}
\usepackage{amssymb}
\usepackage{mathrsfs}
\usepackage{graphicx}
\usepackage{color}
\usepackage{pgfpages}
\usepackage{ifthen}
\usepackage{leftidx,tensor}
\usepackage[T1]{fontenc}
\usepackage[latin1]{inputenc}
\usepackage{mathtools}
\usepackage{comment}
\usepackage{dsfont}
\usepackage[nocompress]{cite}
\usepackage[shortlabels]{enumitem}
\usepackage{aliascnt}
\usepackage[bookmarks=true,pdfstartview=FitH, pdfborder={0 0 0}, colorlinks=true,citecolor=red, linkcolor=blue]{hyperref}
\usepackage{bbm}
\usepackage{nicefrac}

\theoremstyle{plain}
\newtheorem{thm}{Theorem}[section]
\newaliascnt{cor}{thm}
\newaliascnt{prop}{thm}
\newaliascnt{lem}{thm}
\newtheorem{cor}[cor]{Corollary}

\newtheorem{lem}[lem]{Lemma}
\aliascntresetthe{cor}
\aliascntresetthe{prop}
\aliascntresetthe{lem} 
\theoremstyle{definition}
\newaliascnt{defn}{thm}
\newaliascnt{asu}{thm}
\newaliascnt{con}{thm}

\newtheorem{asu}[asu]{Assumption}

\aliascntresetthe{defn}
\aliascntresetthe{asu}
\aliascntresetthe{con}
\newcounter{stp}
\newcounter{stpi}
\newcounter{stpci}
\newcounter{stpiii}

\newtheorem{step}[stp]{Step}

\theoremstyle{thm}
\newaliascnt{rem}{thm}
\newaliascnt{exa}{thm}
\newaliascnt{masu}{thm}
\newaliascnt{nota}{thm}
\newaliascnt{sett}{thm}
\newtheorem{rem}[rem]{Remark}

\aliascntresetthe{rem}
\aliascntresetthe{exa}
\aliascntresetthe{masu}
\aliascntresetthe{nota}
\aliascntresetthe{sett}
\numberwithin{equation}{section}

\setlist[enumerate]{font = \normalfont}

\newcommand {\R}	{\mathbb{R}}

\newcommand {\E}	{\mathbb{E}}

\newcommand {\T}	{\mathbb{T}}

\renewcommand{\d}{\, \mathrm{d}}

\DeclareMathOperator{\divH}{div_{\H}}

\newcommand{\Hinfty}{\mathcal{H}^\infty}

\newcommand{\D}{\mathrm{D}}
\newcommand{\rN}{\mathrm{N}}
\renewcommand{\H}{\mathrm{H}}
	\newcommand{\dk}[1]{\partial_{#1}}
	\newcommand{\dt}{\dk{t}} 
	\newcommand{\dz}{\dk{z}} 
	
	\newcommand{\eps}{\varepsilon}
	\renewcommand{\phi}{\varphi}
	
	\renewcommand{\bar}[1]{\overline{#1}}

	\renewcommand{\div}{\mathrm{div} \, }
	\newcommand{\nablaH}{\nabla_{\H}}

    \newcommand{\Cof}{\mathrm{Cof}}
	
	\newcommand{\rC}{\mathrm{C}}
	\newcommand{\rL}{\mathrm{L}}
	\newcommand{\rW}{\mathrm{W}}
	\newcommand{\rH}{\H}
   
	\newcommand{\rB}{\mathrm{B}}

	\newcommand{\varrhobar}{\bar{\varrho}}

	\newcommand{\rX}{\mathrm X}

\makeatletter
\renewcommand{\footnoterule}{} % removes the line above first-page footnotes (subjclass/keywords/etc.)
\makeatother

\title[Heat-Conducting, Compressible primitive equations]{Global Strong Well-posedness of the heat-conducting, compressible primitive Equations}

\author{Tarek Z\"{o}chling}
\address{Technische Universit\"{a}t Darmstadt,
Schlo\ss{}gartenstra{\ss}e 7, 64289 Darmstadt, Germany.}
\email{zoechling@mathematik.tu-darmstadt.de}

\begin{document}
\subjclass[2020]{35Q86, 35Q35, 76D03, 35K55}
\keywords{Heat-conducting, compressible primitive equations, global existence and uniqueness of strong solutions close to Equilibria. \\
Tarek Z\"ochling gratefully acknowledge the support by the Deutsche Forschungsgemeinschaft (DFG) through the Research Unit FOR~5528
}

\maketitle
% =========================================================
% Setting and notation
\begin{abstract}
    The full heat-conducting compressible primitive equations are considered, extending the compressible primitive-equation framework by coupling the temperature through the ideal gas law and the thermal energy balance in the presence of gravity. Global strong well-posedness is established for small perturbations of an equilibrium state, thereby providing a result beyond the isothermal regime. The proof relies on a structural representation of the density in terms of the temperature and on a new Lagrangian transformation.
\end{abstract}
\makeatletter
\renewcommand{\footnoterule}{%
  \kern 2pt
  \hrule width \textwidth
  \kern 4pt
}
\makeatother

\begingroup
\makeatletter
\renewcommand{\thefootnote}{}% no marker text
\long\def\@makefntext#1{\noindent #1}% removes the usual footnote indent/label box
\makeatother
\footnotetext{%
\footnotesize
\noindent\rule{\linewidth}{0.4pt}\par
\smallskip
After finishing this manuscript, I become aware of the very recent article \cite{KLLT:26} by  R.~Klein, J.~Li, X.~Liu, and E.~S.~Titi (arXiv:2602.19801) dealing with a closely related subject.}
\addtocounter{footnote}{-1}
\endgroup
\section{Introduction}
\label{subsec:full-cpe-temperature}
\noindent
The primitive equations are among the fundamental models of geophysical fluid dynamics and play a central role in modern meteorology and weather prediction. They are derived from the Navier--Stokes equations in the hydrostatic regime and provide the standard large-scale description of atmospheric and oceanic flows. A defining feature of the primitive equations, in contrast to the Navier--Stokes system, is that the vertical momentum equation is not treated as an evolution equation for the vertical velocity, instead, it is replaced by the \emph{hydrostatic balance}
\[
\partial_z p = -\varrho g,
\]
which expresses that, in the vertical direction, the pressure gradient is balanced by gravity. Here \(p\) denotes the pressure, \(\varrho\) the density, and \(g\) the gravitational constant. A systematic mathematical analysis of these equations was initiated in the seminal works of Lions, Temam, and Wang~\cite{LTW:92a,LTW:92b,LTW:95}. 

\medskip

\noindent
In this work, the \emph{compressible primitive equations} with gravity are studied, and \emph{global strong well-posedness} is established for initial data given by small perturbations of an equilibrium state. Indeed, density, pressure, and temperature evolve in a coupled way through the ideal gas law and the thermal energy balance, and the resulting system carries the correct energetic structure.
Let $\Omega=\T^2\times(0,1)$ and let $\Omega_\tau:=\Omega\times(0,\tau)$ denote the associated parabolic cylinder for a fixed time horizon $0<\tau<\infty$. Consider the compressible primitive equations on $\Omega_\tau$, given by
\begin{equation}
	\left\{
	\begin{aligned}
        \dt \varrho +\divH  (\varrho v) + \dz (\varrho w)  &=0 &&\text{ in }\Omega_\tau, \\
		\varrho\, \bigl (\dt  v + u \cdot \nabla v \bigr )  - \mu \, \Delta v- \mu'\, \nablaH \divH v +  \nablaH p &=0 &&\text{ in } \Omega_\tau,  \\
		  \dz p  &= -g \varrho   &&\text{ in } \Omega_\tau, \\
        p &= R\, \varrho\, \Theta &&\text{ in } \Omega_\tau,  \\
c_\nu \varrho \, \big(\partial_t \Theta+ u \cdot \nabla \Theta\big)
+p\; \div u -\kappa \, \Delta \Theta &= Q+ \Phi &&\text{ in } \Omega_\tau,  \\
\varrho(0)=\varrho_0, \ v(0)=v_0,   \ \Theta(0) &= \Theta_0.
	\end{aligned}
	\right.
	\label{eq:full prim}\tag{\textcolor{blue}{CPE}}
\end{equation}
Here the unknowns are the density $\varrho \colon \Omega_\tau \to \R$, the velocity field $u=(v,w)\colon \Omega_\tau \to \R^3$, the pressure $p\colon \Omega_\tau \to \R$, and the temperature $\Theta\colon \Omega_\tau \to \R$. Moreover, $R>0$ denotes the (specific) ideal gas constant, $c_\nu>0$ the specific heat at constant volume, and $\kappa>0$ the heat conductivity,  while \(Q\) and \(\Phi\) denote external diabatic heating/cooling and viscous dissipation, respectively.  Here we use $\nablaH$, $\divH$ for the horizontal gradient and the horizontal divergence, that is $\nablaH:=(\partial_x,\partial_y)^\top$ and
$\divH:=\nablaH \cdot{}$. In the following, we assume that the viscosity coefficients satisfy $\mu>0$ and $\mu+\mu'>0$. The system is complemented by the boundary conditions
\begin{equation}\label{eq: bc}
\partial_z v|_{z=0,1}=0, \quad w|_{z=0,1}=0\ \text{ and }\ \partial_z \Theta|_{z=0,1}=0.
\end{equation}
An important point for the present work is that \eqref{eq:full prim} is not only a formally natural extension of the isothermal compressible primitive equations, but also the energetically correct model once the thermodynamic equation is closed in accordance with the first law. More precisely, for smooth solutions and $Q\equiv 0$, we choose the viscous heating $\Phi$ to coincide with the mechanical dissipation generated by the viscous terms in the momentum equation, namely
\begin{equation}\label{eq:phi-choice}
\Phi \;:=\; \mu\,|\nabla v|^2 \;+\; \mu'\,|\divH v|^2 .
\end{equation}
With this closure, the total energy, kinetic, internal, and gravitational potential energy, is conserved
\begin{equation}\label{eq:total-energy-def}
\mathcal E(t)
:=\int_\Omega\Big(\frac12\,\varrho|v|^2 \;+\; c_\nu \varrho\,\Theta \;+\; g\varrho z\Big)\d x
\qquad
\partial_t \mathcal E=0.
\end{equation}
Formally, this identity is obtained by testing the horizontal momentum equation with $v$, integrating the temperature equation over $\Omega$, and combining the result with the continuity equation weighted by $gz$. 
This shows that \eqref{eq:full prim} is the appropriate heat-conducting atmospheric model from the energetic point of view and provides the correct starting point for a rigorous well-posedness theory beyond the isothermal regime. A key consequence of combining the ideal gas law with the hydrostatic balance is that the pressure exhibits an explicit vertical dependence through the temperature, and this dependence is both \emph{nonlinear} and \emph{nonlocal}. More precisely, one has
\begin{equation*}
    p(t,x,y,z) = p_s(t,x,y)\,\exp\!\Big(-\int_0^z \frac{1}{\Theta(\cdot,\eta)}\,\d\eta\Big),
\end{equation*}
where \(p_s\) denotes the pressure at the surface, see also \autoref{sec : prelim} for more details. We note that without gravity, i.e. \(g=0\), this relation reduces to \(p=p_s\), which makes the analysis substantially simpler. In particular, in this case one can average the temperature equation without generating nonlocal temperature contributions, which leads to a substantially simpler representation of the vertical velocity.
Thus, incorporating gravity is not only physically natural, but also introduces a genuinely additional mathematical difficulty.

\medskip

\noindent
Let us now comment on previous work and categorize our result. For the \emph{incompressible} primitive equations Lions, Temam and Wang established the existence of global weak solutions, constructed via Galerkin approximation, see \cite{LTW:92a,LTW:92b,LTW:95}. 
% In the \emph{compressible} setting, they work in \emph{pressure coordinates}, which essentially transform the system into one of incompressible type and thereby allow them to prove existence of weak solutions by adapting the analysis from the incompressible primitive equations~\cite{LTW:92a,LTW:95}. We emphasize, however, that this pressure coordinate approach is not mathematically rigorous, and uniqueness of weak solutions for the incompressible primitive equations remains open.
The well-posedness theory was later substantially advanced, most notably by Cao and Titi~\cite{CT:07}, who established global well-posedness in three dimensions for arbitrarily large \( \rH^1 \) data. In \cite{HK:16,GGHHK:20,GGHHK:20b}, the theory was further extended to initial data in the critical spaces \(\rB^{2/q}_{qp}\), as well as to continuous initial data subject to small \(\rL^\infty\) perturbations.

\medskip

\noindent
By contrast, the compressible case remains much less understood. Under Neumann boundary conditions, local and global strong well-posedness for small data in the absence of gravity ($g=0$) were established in \cite{LT:20,LT:21}. These results were later extended in \cite{HIRZ:25} to include gravity together with the isothermal ideal gas law. More recently, in \cite{HJMZ:25}, we proved global well-posedness of the two-dimensional compressible primitive equations for arbitrarily large data in \(\rH^1\). Unlike in the incompressible setting, however, all currently available well-posedness results for the compressible primitive equations are restricted to the \emph{isothermal} regime.

\medskip

\noindent
Concerning the heat conducting compressible Navier-Stokes equations, for weak-solution theory we refer to  Bresch, Desjardins \cite{BD:07} or Feireisl, Novotn\'y  \cite{FN:17}. For strong solutions a classical reference is Matsumura, Nishida  \cite{MN:83}.
\subsection{Analytical strategy}

Known approaches to the compressible primitive equations exploit in an essential way that the continuity equation admits a \textit{dimension reduction}. In the isothermal setting, the hydrostatic balance $\partial_z p=-\varrho g$ together with a polytropic pressure law $p=\varrho^\gamma$ implies an explicit vertical profile for the density. In regimes without gravity this simplification becomes even more pronounced, since the vertical dependence is then trivial; see, for instance, \cite{LT:20,LT:21, HIRZ:25}. 
In the non-isothermal case, however, the temperature prevents such a direct reduction. Indeed, the ideal-gas law $p=R\varrho\Theta$ introduces an additional vertical dependence through $\Theta$, so that neither the density nor the pressure can be expressed in a simple separated form. As a consequence, recovering the vertical velocity $w$ from the continuity equation becomes substantially more delicate.

\medskip

\noindent
Our approach relies on a careful exploitation of the intrinsic structure of the system. A key observation is that the combination of the ideal gas law and the hydrostatic balance allows one to express the vertical dependence of the density explicitly in terms of the temperature through a nonlinear functional. More precisely, one can write
\begin{equation*}
    \varrho(t,x,y,z) = \bar{\varrho}(t,x,y)\,\hat{B}(\Theta(t,x,y,z)),
\end{equation*}
where
\begin{equation*}
    \hat{B}(\Theta(t,x,y,z))
    := \frac{B(\Theta(t,x,y,z))}{\bar{B}(\Theta(t,x,y))}
    = \frac{1}{\Theta(t,x,y,z)}
    \,\frac{\exp\!\Big(-\int_0^z \frac{1}{\Theta(\cdot,\eta)}\,\d\eta\Big)}
    {1-\exp\!\Big(-\int_0^1 \frac{1}{\Theta(\cdot,\eta)}\,\d\eta\Big)}.
\end{equation*}
Here, \(\bar{\varrho}\) denotes the vertical average of the density, i.e. $\bar{\varrho}=\int_0^1 \varrho(\cdot,\eta)\,\d\eta.$
This representation shows that the averaged density \(\bar{\varrho}\) satisfies the evolution equation
\begin{equation*}
    \dt \bar{\varrho}
    + \divH\!\Big(
        \bar{\varrho}\,\int_0^1 (\hat{B}(\Theta)\,v)(\cdot,\eta)\,\d\eta
      \Big)
    =0,
\end{equation*}
see also \autoref{sec : prelim}. In particular, the full continuity equation is no longer used as an independent evolution equation for the full density, but instead yields an evolution equation for the averaged density together with a representation formula for the vertical velocity
\begin{equation*}
   ( \varrho\, w)(t,x,y,z)
    = -\int_0^z
    \big[\dt \varrho + \divH(\varrho\,v)\big](t,x,y,\eta)\,\d\eta.
\end{equation*}
A major difficulty, compared to the isothermal regime (cf.~\cite{HIRZ:25,LT:20}), is that the time-derivative contributions do \emph{not} cancel. Instead, they generate time derivatives of the nonlinear functional \(\hat{B}\), which must be incorporated into the full linearization of the system. As a consequence, one inevitably obtains \emph{nonlocal} time-derivative contributions in the temperature equation.
To handle the above density structure encoded by \(\hat{B}\), we introduce a completely new Lagrangian transformation associated with the horizontal flow field
\[
b(t,x,y) :=\int_0^1 (\hat{B}(\Theta)\,v)(t,x,y,\eta)\,\d\eta.
\]
We then show that this transformation is well defined in regimes where the temperature remains nondegenerate. In our setting, this is ensured by choosing the initial temperature sufficiently close to an equilibrium state.

\medskip

\noindent
Once this Lagrangian transformation is available, we analyze the resulting fully linearized system. This requires a careful linearization of the nonlinear functional \(\hat{B}\) and its Fr\'echet derivative \(D\hat{B}\). In addition, the representation formula for \(w\) has to be decomposed into linear contributions, which enter the linearized system, and genuinely nonlinear remainder terms.
This leads to a linear system containing a nonlocal contribution in the time derivative, explicitly of the form
\begin{equation*}
    \mathcal{L}(\Theta)
    := 2\dt \Theta - \beta(z)\int_0^1 \dt \Theta(\cdot,\eta)\,\d\eta,
\end{equation*}
for a suitable nondegenerate function \(\beta=\beta(z)\) depending on the equilibrium temperature profile and the vertical variable. Exploiting the special structure of \(\mathcal{L}\), we prove that \(\mathcal{L}\) is invertible and are thus led to consider the abstract operator
\begin{equation*}
   A := -\mathcal{L}^{-1}(\alpha \Delta),
\end{equation*}
where \(\alpha=\alpha(z)\) is a variable nondegenerate coefficient. The operator \(A\) is then treated as a noncommuting product of operators. Sectoriality is established by means of \cite{Weber:98}, after verifying a suitable commutator estimate. Finally, once the linear theory is in place, the nonlinear terms are estimated in the appropriate function spaces, and the solution is constructed by a contraction mapping argument.

% \subsection{Outlook towards thermodynamic and moist coupling}

%  Once a temperature-consistent atmospheric theory is in place, it opens the door to revisit the \textit{fully coupled} atmosphere-ocean system with thermodynamic coupling at the air-sea interface (e.g.\ consistent heat fluxes and energy budgets across $\Gamma_i$), and to incorporate temperature also on the ocean side, see \cite{LTW:93, LTW:95, BBH:25, Z:25}. In parallel, one may consider an atmosphere-only extension of the compressible primitive equations in which the temperature equation is coupled with \emph{moisture dynamics} (humidity transport, phase transitions, and latent heat effects). For this moist extension as well, the currently available mathematical results rely on a reformulation in {pressure coordinates}, and constant pressure at the boundaries, see also \cite{HKLT:23}. For related results concerning moisture dynamics coupled to the compressible Navier-Stokes equations see \cite{BHMZ:26a, DKLT:25}. Both the full coupled atmosphere-ocean system and the compressible primitive equations coupled to moisture dynamics will be considered in future studies. 

 \medskip

 \noindent
 The article is structured as follows. In \autoref{sec : prelim}, we collect the preliminaries and reformulate the system using the explicit structural representation described above. In \autoref{sec: main}, we state the main local and global well-posedness results. In \autoref{sec: Lagrange}, we introduce the Lagrangian transformation, reformulate the system accordingly, and prove that the transformation is well defined. Next, in \autoref{sec: local}, we establish the well-posedness of the linearized system and derive estimates for the nonlinear terms. Finally, in \autoref{sec: proof} we conclude global existence by a contraction mapping argument.

\section{Preliminaries}\label{sec : prelim}
\noindent
In the following we assume, without loss of generality, that all constants are normalized to one and that $Q=\Phi=0$ and that the viscosity coefficients $\mu$, $\mu'$ satisfy the standard condition $\mu>0$ and $\mu + \mu'>0$. Note that in particular we can also choose $\Phi= \Phi (v) = \mu\,|\nabla v|^2 \;+\; \mu'\,|\divH v|^2$, see \autoref{rem: main}. 
\noindent
Combining hydrostatic balance with the equation of state yields an explicit representation of the pressure
\begin{equation}
    \label{eq: pressure}
    p(t,x,y,z) = p_s(t,x,y) \, \mathrm{exp} \big ( {-\int_0^z \frac{1}{\Theta(\cdot, \eta)} \d \eta } \big ), \ \text{ where } \ p_s(t,x,y) := p(t,x,y,z=0).
\end{equation}
In particular, the pressure at the top boundary, $p_t(t,x,y) \coloneqq p(t,x,y,z=1)$, is completely determined by $p_s$ and $\Theta$. More precisely,
\begin{equation*}
    p_t(t,x,y) = p_s(t,x,y)\, \mathrm{exp} \big ( {-\int_0^1 \frac{1}{\Theta(\cdot, \eta)} \d \eta } \big ).
\end{equation*}
Using the ideal gas law with $R=1$, the density can be written as
\begin{equation}
    \label{eq: density}
    \varrho(t,x,y,z) = p_s(t,x,y)\, B(\Theta(t,x,y,z)), 
\end{equation}
where the functional $B$ is defined by
\begin{equation}
    \label{eq func B}
    B(\Theta(t,x,y,z)) := \frac{1}{\Theta(t,x,y,z)} \, \mathrm{exp} \big ( {-\int_0^z \frac{1}{\Theta(\cdot, \eta)} \d \eta } \big ).
\end{equation}
Denoting by $\bar{f}$ the vertical average of an integrable function $f$, that is  $\bar{f}=\int_0^1 f(\cdot,\eta)\,\d\eta$, we infer
\begin{equation}\label{eq: avg density}
    \bar{\varrho}(t,x,y) = p_s(t,x,y) \, \bar{B}(\Theta(t,x,y)) = p_s(t,x,y) \Big ( 1- \mathrm{exp} \big ( {-\int_0^1 \frac{1}{\Theta(\cdot, \eta)} \d \eta } \big ) \Big ),
\end{equation}
where we used the identity
\begin{equation*}
    \frac{\d}{\d z} \Big (\mathrm{exp} \big ( {-\int_0^z \frac{1}{\Theta(\cdot, \eta)} \d \eta } \big ) \Big ) = -B(\Theta(t,x,y,z)).
\end{equation*}
In particular, this coincides with the representation obtained by vertically averaging hydrostatic balance, namely $\bar{\varrho}(t,x,y)=p_s(t,x,y)-p_t(t,x,y)$.
Next, we introduce the normalized functional $\hat{B}$, defined as the ratio between $B$ and its vertical mean:
\begin{equation}
    \label{eq: hat B}
    \hat{B}(\Theta(t,x,y,z)) := \frac{B(\Theta(t,x,y,z))}{\bar{B}(\Theta(t,x,y))} = \frac{1}{\Theta(t,x,y,z)} \, \frac{\mathrm{exp} \big ( {-\int_0^z \frac{1}{\Theta(\cdot, \eta)} \d \eta } \big )}{1- \mathrm{exp} \big ( {-\int_0^1 \frac{1}{\Theta(\cdot, \eta)} \d \eta } \big )} \ \text{ and } \ \int_0^1 \hat{B}(\Theta(\cdot,\eta)) \d \eta =1.
\end{equation}
With this notation, the density can equivalently be expressed as
\begin{equation*}
    \varrho(t,x,y,z) = \bar{\varrho}(t,x,y)\, \hat{B}(\Theta(t,x,y,z)).
\end{equation*}

\noindent
Motivated by this identity, we vertically average the continuity equation and obtain
\begin{equation}
    \label{eq: avg cont}
    \dt \varrhobar + \divH (\varrhobar \, b) =0, \ \text{ where } \ b(t,x,y) := \int_0^1 (\hat{B}(\Theta) \, v ) (\cdot, \eta) \d \eta.
\end{equation}
In the hydrostatic setting, the un-averaged continuity equation is no longer employed as an independent evolution law. Instead, once $\varrhobar$ and $\Theta$ are represented as above, it provides a diagnostic formula for the vertical velocity $w$ in terms of $\varrhobar$, $\Theta$, and $v$. More precisely, using the density representation, we infer that $w$ is given by
\begin{equation}\label{eq:W-noQt}
\begin{aligned}
\big (\varrhobar\, \hat{B}(\Theta) \,w\big )(t,x,y,z)
=-\int_{0}^{z}\Big[
\hat B(\Theta)\,(v-b)\cdot\nablaH \varrhobar
+\varrhobar\, \big(\partial_t\hat B(\Theta)+\divH(\hat B(\Theta)\, v)-\hat B(\Theta)\,\divH b\big)
\Big](\cdot,\eta) \d\eta.
\end{aligned}
\end{equation}
We emphasize that the boundary conditions $w|_{z=0,1}=0$ are automatically satisfied by this representation whenever $\varrhobar$ solves the averaged continuity equation \eqref{eq: avg cont}.

With these preparations, the system \eqref{eq:full prim} can be recast in the following form
\begin{equation}
	\left\{
	\begin{aligned}
        \dt \varrhobar +\divH  (\varrhobar \, b)  &=0 &&\text{ in }\T^2_\tau, \\
		\varrhobar \, \hat{B}(\Theta) \bigl (\dt  v + u \cdot \nabla v \bigr )  - \mu \Delta v- \mu' \nablaH \divH v +  \nablaH  ( \varrhobar\, \hat{B}(\Theta) \, \Theta) &=0 &&\text{ in } \Omega_\tau,  \\
 \varrhobar\, \hat{B}(\Theta) \big(\partial_t \Theta+ u \cdot \nabla \Theta\big)
+\varrhobar\, \hat{B}(\Theta) \, \Theta\, \div u - \Delta \Theta &=0 &&\text{ in } \Omega_\tau,  \\
\varrhobar(0)=\varrhobar_0 := \int_0^1 \varrho_0(\cdot,\eta) \d \eta, \ v(0)=v_0,   \ \Theta(0) &= \Theta_0.
	\end{aligned}
	\right. 
	\label{eq:full prim reformulated}
\end{equation}
In this formulation, the prognostic variables are the averaged density $\varrhobar$, the horizontal velocity $v$, and the temperature $\Theta$. The remaining quantities are determined diagnostically: the pressure is given by $p=\varrhobar\,\hat{B}(\Theta)\,\Theta$ with $\hat{B}(\Theta)$ as in \eqref{eq: hat B}, the averaged flux is $b=\int_0^1(\hat{B}(\Theta)\,v)(\cdot,\eta)\,\d\eta$, and the vertical velocity $w$ is recovered from \eqref{eq:W-noQt}.

\section{Main Results}\label{sec: main}
\noindent
We are now in the position to formulate our main theorem concerning the global, strong well-posedness for initial data close to a steady state of the system \eqref{eq:full prim} subject to the boundary conditions \eqref{eq: bc}. Our requirements on the set of initial data are collected in the following assumption.

\begin{asu}\label{assu:data0}
Let $\tau>0$ and $\varrhobar^\ast, \Theta^\ast>0$. Assume that 
\begin{equation*}
    (\varrhobar_0, v_0, \Theta_0) \in \rH^3(\T^2) \times \rH^3(\Omega)^2 \times \rH^3(\Omega)
\end{equation*}
satisfying $\dz v_0 |_{z=0,1}= \dz \Theta_0 |_{z=0,1}=0$. Moreover, assume that 
\begin{equation*}
    \| (\varrhobar-\varrhobar^\ast, v_0, \Theta_0 - \Theta^\ast) \|_{\rH^3(\T^2) \times \rH^3(\Omega)^2 \times \rH^3(\Omega)} \leq \eps.
\end{equation*}
\end{asu}
\noindent
The main theorem of this article then reads as follows.
\begin{thm}[Global, strong well-posedness of \eqref{eq:full prim}]\label{thm main} \mbox{}\\
Let $\tau>0$ and $\varrhobar^\ast,\Theta^\ast>0$. Assume that the initial data
$(\varrhobar_0,v_0,\Theta_0)$ satisfy \autoref{assu:data0}. Then there exists
\[
\eps_0=\eps_0(\tau,\varrhobar^\ast,\Theta^\ast,\Omega)>0
\]
such that for every $\eps\in(0,\eps_0)$ the reformulated system
\eqref{eq:full prim reformulated} subject to the boundary conditions
\eqref{eq: bc} admits a unique strong solution $(\varrhobar,v,\Theta)$ with
\begin{equation*}
\begin{aligned}
\varrhobar &\in \rH^{1}(0,\tau;\rH^2(\T^2)) \cap \rL^\infty(0,\tau;\rH^{3}(\T^2)), \\
v &\in \rH^{1}(0,\tau;\rH^2(\Omega;\R^2)) \cap \rL^2(0,\tau;\rH^{4}(\Omega;\R^2)), \\
\Theta &\in \rH^{1}(0,\tau;\rH^2(\Omega)) \cap \rL^2(0,\tau;\rH^{4}(\Omega)).
\end{aligned}
\end{equation*}
With $\hat{B}(\Theta)$ as in \eqref{eq: hat B}, define the diagnostic quantities
\[
\varrho := \varrhobar\,\hat{B}(\Theta), 
\quad 
p := \varrho\,\Theta
\quad \text{ and } \quad
w \text{ by } \eqref{eq:W-noQt}.
\]
Then $(\varrho,u=(v,w),p,\Theta)$ is a unique, strong solution of the full compressible
primitive equations \eqref{eq:full prim}. Moreover, for all $(t,x,y,z)\in[0,\tau]\times\Omega$,
\begin{equation*}
\frac{\varrhobar^\ast \hat{B}(\Theta^\ast)(z)}{2} \le \varrho(t,x,y,z)
\le \frac{3\,\varrhobar^\ast \hat{B}(\Theta^\ast)(z)}{2} \quad \text{ and } \quad
\frac{\Theta^\ast}{2} \le \Theta(t,x,y,z) \le \frac{3\,\Theta^\ast}{2}.
\end{equation*}
\end{thm}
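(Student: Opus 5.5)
The proof follows the strategy outlined in the introduction: pass to Lagrangian coordinates for the averaged continuity equation, linearize around the equilibrium $(\varrhobar^\ast,0,\Theta^\ast)$, establish maximal regularity for the linear system, and close a fixed-point argument on $[0,\tau]$ with smallness $\eps$.

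\textbf{Lagrangian transformation.} Given $(v,\Theta)$ in the solution class with $\Theta$ close to $\Theta^\ast$ in $\rL^\infty$, the flux $b=\int_0^1\hat B(\Theta)v\,\d\eta$ belongs to $\rL^1(0,\tau;\rW^{1,\infty}(\T^2))$ with small norm. This follows from the embedding $\rL^2(0,\tau;\rH^4)\hookrightarrow \rL^1(0,\tau;\rW^{1,\infty})$ and the smoothness of $\hat B$ near $\Theta^\ast$, where the denominator $1-\exp(-\int_0^1\Theta^{-1})$ stays bounded away from zero. The flow $X(t,y)=y+\int_0^t b(s,X(s,y))\,\d s$ is therefore a diffeomorphism of $\T^2$ close to the identity, and it extends trivially in $z$ to $\Omega$. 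Along $X$, the averaged density satisfies $\varrhobar(t,X)=\varrhobar_0\exp(-\int_0^t\divH b)$. Consequently $\varrhobar-\varrhobar^\ast$ is controlled in $\rL^\infty(0,\tau;\rH^3)\cap\rH^1(0,\tau;\rH^2)$ by $\eps$ plus the norm of $b$. This removes the hyperbolic part, and the momentum and temperature equations in Lagrangian variables become parabolic with perturbed coefficients.

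\textbf{Linearization and linear theory.} At the equilibrium, $\hat B(\Theta^\ast)(z)=e^{-z/\Theta^\ast}/(\Theta^\ast(1-e^{-1/\Theta^\ast}))$. Using the Fr\'echet derivative $D\hat B(\Theta^\ast)$, the term $\partial_t\hat B(\Theta)$ in \eqref{eq:W-noQt} and the $\div u$ term in the temperature equation produce the nonlocal operator $\mathcal L(\Theta)=2\dt\Theta-\beta(z)\overline{\dt\Theta}$. I would first invert $\mathcal L$ explicitly. Averaging $\mathcal L f=h$ gives $(2-\bar\beta)\bar f=\bar h$, so invertibility holds provided $\bar\beta\neq2$, which I will check from the explicit form of $\beta$. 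Then $\mathcal L^{-1}h=\tfrac12\big(h+\beta\bar h/(2-\bar\beta)\big)$, which is bounded on $\rH^k$ and preserves the Neumann conditions. The temperature part of the linear system is then $\dt\Theta+A\Theta=F$ with $A=-\mathcal L^{-1}\alpha\Delta$. I will prove that $A$ is sectorial with bounded $\mathcal H^\infty$-calculus or $\mathcal R$-sectorial, with angle $<\pi/2$, on $\rH^2$ with domain $\rH^4$ with Neumann conditions. The argument views $A$ as a perturbation of the noncommuting product $\mathcal L^{-1}\alpha\cdot(-\Delta)$ and applies \cite{Weber:98}, using the commutator estimate $\|[\mathcal L^{-1}\alpha,(\lambda-\Delta)^{-1}]\|$, which is small since the commutator loses one derivative and involves only the smooth $z$-coefficients. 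The velocity part is a Lam\'e-type system $\varrhobar^\ast\hat B(\Theta^\ast)\dt v-\mu\Delta v-\mu'\nablaH\divH v+(\text{lower order coupling to }\Theta,\varrhobar)=F$, which has maximal $\rL^2$-regularity by standard theory. Treating the couplings as lower-order perturbations on the finite interval $[0,\tau]$ gives maximal regularity of the full linear system in $\rH^1(0,\tau;\rH^2)\cap\rL^2(0,\tau;\rH^4)$, with constants depending on $\tau$.

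\textbf{Nonlinear estimates and fixed point.} The remainders consist of the quadratic parts of $\hat B$, the Lagrangian coefficient perturbations, the convection $u\cdot\nabla$ with $w$ from \eqref{eq:W-noQt}, and the pressure terms. I will bound them in $\rL^2(0,\tau;\rH^2)$ by $C(\tau)(\eps+\|z\|_{\mathbb E})^2$ on a ball of the maximal regularity space, where $\mathbb E$ denotes that space. The estimates rely on $\rH^2(\Omega)$ being an algebra and on trace embeddings into $\rC([0,\tau];\rH^3)$. The hardest term is $w$: it involves $\dt\hat B(\Theta)$, and its nonlinear part contains $\dt\Theta$ only through $D\hat B(\Theta)-D\hat B(\Theta^\ast)$, so it is $O(\|\Theta-\Theta^\ast\|_\infty\|\dt\Theta\|)$ and is handled in $\rH^1(0,\tau;\rH^2)$. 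Choosing $\eps_0$ small then gives a self-map and a contraction. Uniqueness follows from the contraction, and transforming back gives the solution of \eqref{eq:full prim reformulated} and then of \eqref{eq:full prim}, because the boundary conditions for $w$ follow from \eqref{eq: avg cont}. The pointwise bounds follow from $\|\Theta-\Theta^\ast\|_{\rL^\infty}+\|\varrhobar-\varrhobar^\ast\|_{\rL^\infty}\lesssim C(\tau)\eps$ via embedding and the Lipschitz continuity of $\hat B$.

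The main obstacle is the sectoriality of $A$, together with verifying that $\mathcal L$ is nondegenerate, i.e.\ $\bar\beta\neq2$.
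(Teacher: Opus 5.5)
Your plan follows the paper's architecture: the Lagrangian flow of $b$, a linearization producing $\mathcal{L}=2\,\mathrm{Id}-\mathcal{P}$ with $\mathcal{P}f=\beta\bar{f}$, explicit inversion, a Weber-type product argument, couplings treated as perturbations, and a contraction. Your nondegeneracy check works, since $\int_0^1\beta\d z=1$ and your formula reduces to $\mathcal{L}^{-1}=\frac12(\mathrm{Id}+\mathcal{P})$.

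The gap is in the step you single out as the main obstacle. Your claim that $\mathcal{L}^{-1}$ preserves the Neumann condition is false: $\partial_z(\mathcal{L}^{-1}h)|_{z=0,1}=\frac12\beta'\,\bar{h}$ with $\beta'=\beta/\Theta^\ast>0$. Multiplication by $\alpha$ does not preserve it either, since $\alpha'=\alpha/\Theta^\ast$. For $u\in\D(\Delta_{\rN})$ one therefore gets $\partial_z(\mathcal{L}^{-1}\alpha u)|_{z=0,1}=\frac12(\alpha' u+\beta'\,\overline{\alpha u})|_{z=0,1}\neq0$. So $\mathcal{L}^{-1}\alpha(\lambda-\Delta_{\rN})^{-1}f$ violates the boundary condition that $(\lambda-\Delta_{\rN})^{-1}\mathcal{L}^{-1}\alpha f$ satisfies. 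The formal identity $[T,R_\lambda]=R_\lambda[T,\Delta]R_\lambda$ then picks up a boundary term, and ``the commutator loses one derivative'' gives no gain.

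This ties into a second problem in the setting you take over from the paper. $\rH^4_{\rN}(\Omega)$ is not dense in $\rH^2(\Omega)$, because the Neumann trace is continuous on $\rH^2$. Hence $-\Delta$ with this domain is not sectorial on $\rH^2(\Omega)$. Concretely, for $f=z$ a boundary layer gives $\|(\lambda-\Delta_{\rN})^{-1}f\|_{\rH^2}\sim\lambda^{-3/4}$ as $\lambda\to\infty$, not $\lambda^{-1}$. Moving to a Neumann-compatible ground space is exactly where the invariance you claimed would be needed.

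Falling back on the trivial bound $\|AR_\mu-R_\mu A\|\le2\|A\|\|R_\mu\|$, as the paper does, does not rescue the step. A commutator estimate with no extra decay in $\mu$ cannot by itself give sectoriality of a product. For example, $P_0=\begin{pmatrix}9&-8\\9&-8\end{pmatrix}$ is a projection and $A_0=\frac12(\mathrm{I}_2+P_0)$ has spectrum $\{\frac12,1\}$, yet $A_0\,\mathrm{diag}(\sigma,2\sigma)$ has the double eigenvalue $-\sigma$. Take $A=\bigoplus_n A_0$ and $B=\bigoplus_n\mathrm{diag}(\sigma_n,2\sigma_n)$ with $\sigma_n\to\infty$. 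Then $A$ is bounded with a bounded $\Hinfty$-calculus, $B$ is positive self-adjoint, and the trivial commutator bound holds, while $AB$ has eigenvalues $-\sigma_n\to-\infty$.

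What actually makes this step work is a structural fact that neither you nor the paper uses. Both $\alpha$ and $\beta$ are constant multiples of $e^{z/\Theta^\ast}$, so $c:=\beta/\alpha$ is constant. Hence $\langle\mathcal{P}f,g\rangle_{\rL^2(\Omega;\alpha^{-1}\d x)}=c\int_{\T^2}\bar{f}\,\bar{g}$ is symmetric, i.e. $\mathcal{P}$ is an orthogonal projection in $\rL^2(\Omega;\alpha^{-1}\d x)$. Use the equivalent inner product $[f,g]:=\langle\mathcal{L}f,g\rangle_{\rL^2(\Omega;\alpha^{-1}\d x)}$, which is equivalent because $\mathrm{Id}\le\mathcal{L}\le2\,\mathrm{Id}$ there. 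Then $[\mathcal{L}^{-1}(-\alpha\Delta_{\rN})f,g]=\int_\Omega\nabla f\cdot\nabla g\d x$. So $\mathcal{L}^{-1}(-\alpha\Delta_{\rN})$ is self-adjoint and nonnegative, and has a bounded $\Hinfty$-calculus of angle $0$ on $\rL^2$. This is the energy structure of the model. The higher-order maximal regularity you need should be built on it, with the correct boundary compatibility conditions, rather than on a commutator estimate.
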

\begin{rem}\label{rem: main}
    { \rm The statement of \autoref{thm main} is presented for the simplified case $Q=\Phi=0$. We stress, however, that the same conclusion remains valid if one allows for a prescribed diabatic heating term $Q=Q(t,x,y,z)\in \rL^2(0,\tau;\rH^2(\Omega))$ and includes viscous dissipation, with $\Phi$ taken in the standard form
\begin{equation*}
    \Phi(v) = \mu \, | \nablaH v |^2 + \mu' \, | \divH v |^2.
\end{equation*}}
\end{rem}

\section{Coordinate Transformation}\label{sec: Lagrange}
\noindent
In view of the transport structure of the averaged continuity equation \eqref{eq: avg cont}, it is natural to introduce characteristics associated with the flow field $b$, given by
\begin{equation}
    \label{eq: b}
     b= \int_0^1 (\hat{B}(\Theta) \, v ) (\cdot, \eta) \d \eta \ \text{ with } \  \hat{B}(\Theta) =  \frac{1}{\Theta} \, \frac{\mathrm{exp} \big ( {-\int_0^z \frac{1}{\Theta(\cdot, \eta)} \d \eta } \big )}{1- \mathrm{exp} \big ( {-\int_0^1 \frac{1}{\Theta(\cdot, \eta)} \d \eta } \big )} .
\end{equation}
Define the flow $\rX$ as the solution of the differential equation 
\begin{equation}
\label{eq:flow}
\left \{
    \begin{aligned}
        %\begin{cases}
            \dt \mathrm{X}(t,x,y) &= b (t,\mathrm{X}(t,x,y)), \ t>0 ,\\
            \mathrm{X}(0,x,y) &= (x,y),
       % \end{cases}
    \end{aligned}
    \right. 
\end{equation}
for $(x,y) \in \T^2$. Note that $\mathrm{X}$ is only a
two dimensional flow. Solving \eqref{eq:flow} leads to
\begin{equation}\label{eq:diffeo X Euler Lagrange sea ice para-hyper}
    \rX(t,x,y) = (x,y)^\top + \int_0^t \ b(s,\rX(s,x,y)) \d s 
\end{equation}
for $(x,y) \in \T^2$. In the following, we show that the flow $\mathrm{X}(t,\cdot) : \T^2 \rightarrow \T^2$ is a well-defined $\rC^1$-diffeomorphisms provided that the temperature $\Theta$, the vertical velocity $v$ and hence $b$ are regular enough. To be precise, let $\tau >0$ and assume that $(v, \Theta) \in\E^v_1(0,\tau) \times \E^\Theta_1(0,\tau)$ where
\begin{equation*}\label{eq: assu v and T}
    \E^v_1(0,\tau):= \rL^2(0,\tau; \rH^{4}(\Omega;\R^2)) \cap \rH^{1}(0,\tau;\rH^2(\Omega;\R^2)) \ \text{ and } \    \E^\Theta_1(0,\tau):= \rL^2(0,\tau; \rH^{4}(\Omega)) \cap \rH^{1}(0,\tau;\rH^2(\Omega)). 
\end{equation*}
Moreover, denote by $\Theta^\ast$ a positive constant and assume that 
\begin{equation*}
    \| v \|_{\E^v_1(0,\tau)} + \| \Theta - \Theta^\ast \|_{\E^\Theta_1(0,\tau)} \leq \eps,
\end{equation*}
for some $\eps>0$. Using the embeddings 
\begin{equation*}
    \E^{\Theta}_1(0,\tau) \hookrightarrow \rL^\infty(0,\tau; \rH^3(\Omega)) \hookrightarrow \rL^\infty(0,\tau; \rW^{1,\infty}(\Omega)),
\end{equation*}
there is a constant $C_{\mathrm{emb}}(\Omega,\tau)>0$ such that
\begin{equation*}
    \| \Theta - \Theta^\ast \|_{\rL^\infty(0,\tau;\rL^\infty(\Omega))} \leq C_{\mathrm{emb}} \| \Theta - \Theta^\ast \|_{\E_1(0,\tau)} \leq C\eps.
\end{equation*}
Choosing $C_{\mathrm{emb}} \cdot \eps < \frac{\Theta^\ast}{2}$ results in 
\begin{equation*}
    \frac{\Theta^\ast}{2} \leq \Theta(t,x,y,z) \leq \frac{3\,\Theta^\ast}{2} \ \text{ for all }\ (t,x,y,z) \in [0,\tau] \times \Omega. 
\end{equation*}
Since the maps
\begin{equation*}
    \Theta \mapsto \Theta^{-1},  \ \int_0^z \Theta^{-1} \ \text{ and } \ \sigma \mapsto \frac{1}{1-\mathrm{exp}(-\sigma)}
\end{equation*}
are smooth in the regime where the temperature is bounded from above and below, standard composition estimates yield that $\hat{B}(\Theta) \in \E_1(0,\tau)$ satisfies 
\begin{equation*}
    \| \hat{B}(\Theta) - \hat{B}(\Theta^\ast)(z) \|_{\E^\Theta_1(0,\tau)} \leq C  \| \Theta - \Theta^\ast \|_{\E^\Theta_1(0,\tau)} \leq C\eps,  
\end{equation*}
where $\hat{B}(\Theta^\ast)(z)$ is the induced state for the functional $\hat{B}(\Theta)$ coming from the constant state $\Theta^\ast$, this is,
\begin{equation*}
    \hat{B}(\Theta^\ast)(z) := \frac{\mathrm{exp}(-z/ \Theta^\ast)}{\Theta^\ast (1- \mathrm{exp}(-1/\Theta^\ast)) }.
\end{equation*}
We note in particular that the state $\hat{B}(\Theta^\ast)$ depends on the vertical variable, which is induced by the equation since $\dz p = \varrhobar \, \dz ( \hat{B}(\Theta) \, \Theta) = - \varrhobar \hat{B}(\Theta)$.  Furthermore, we obtain
\begin{equation*}
    \frac{2}{3 \, \Theta^\ast} \, \frac{\mathrm{exp}(-2 / \Theta^\ast)}{1- \mathrm{exp}(-2 / \Theta^\ast)} \leq \hat{B}(\Theta(t,x,y,z)) \leq \frac{2}{\Theta^\ast} \, \frac{1}{1- \mathrm{exp}(-2 / (3\,\Theta^\ast))} \ \text{ for all }\ (t,x,y,z) \in [0,\tau] \times \Omega.
\end{equation*}
Observe that the space $\rH^{3}(\T^2)$ behaves like a Banach algebra with respect to products, which yields the estimate
\begin{equation*}
    \begin{aligned}\label{eq: est b space}
        \| \nablaH b(t) \|_{\rH^{3}(\T^2)} &\leq \int_0^1 \| \nablaH \hat{B}(\Theta(t,\cdot,\eta)) \|_{\rH^{3}(\T^2)}\, \| v(t,\cdot,\eta) \|_{\rH^{3}(\T^2)} + \| \hat{B}(\Theta(t,\cdot,\eta))\|_{\rH^{3}(\T^2)}\,  \| \nablaH v(t,\cdot,\eta) \|_{\rH^{3}(\T^2)} \d \eta \\
        &\leq  C\| \hat{B}(\Theta(t)) \|_{\rH^{4}(\Omega)} \, \| v(t) \|_{\rH^{3}(\Omega)} + \| \hat{B}(\Theta(t)) \|_{\rH^{3}(\Omega)} \, \|v(t)\|_{\rH^{4}(\Omega)}
    \end{aligned}
\end{equation*}
and therefore
\begin{equation*}
   \begin{aligned}
        \| \nablaH b \|_{\rL^2(0,\tau;\rH^{3}(\T^2))} & \leq  \| \hat{B}(\Theta) \|_{\rL^2(0,\tau;\rH^{4}(\Omega))} \, \| v \|_{\rL^\infty(0,\tau;\rH^{3}(\Omega))} + \| \hat{B}(\Theta) \|_{\rL^\infty(0,\tau;\rH^{3}(\Omega))} \, \|v\|_{\rL^2(0,\tau;\rH^{4}(\Omega))} \\ &\leq C \| \hat{B}(\Theta) \|_{\E^\Theta_1(0,\tau)} \, \| v \|_{\E^v_1{0,\tau)}} 
        \\&\leq C \eps.
   \end{aligned}
\end{equation*}
For $\rX$ as in \eqref{eq:diffeo X Euler Lagrange sea ice para-hyper}, we conclude from H\"older's inequality that
\begin{equation}\label{eq:est of nablaH X - Id sea ice para-hyper}
    \begin{aligned}
        \sup_{t\in [0,\tau]} \| \nablaH \rX - \mathrm{I}_2 \|_{\rH^{3}(\T^2)}
        \le C \int_0^\tau \| \nablaH b(s,\cdot) \|_{\rH^{3}(\T^2)} \d s
        \le C \tau^{\nicefrac{1}{2}}\eps. 
    \end{aligned}
\end{equation}
In particular, we conclude that by choosing $\eps \in (0, \alpha)$ where $\alpha = \min \big ( \frac{1}{2C \tau^{\nicefrac{1}{2}}}, \frac{\Theta^\ast}{2C_{\mathrm{emb}}} \big )$ we obtain the estimate
\begin{equation}\label{eq:est of nablaHX - Id in Linfty}
    \sup_{t \in [0,\tau]} \| \nablaH \rX - \mathrm{I}_2 \|_{\rW^{1,\infty}(\T^2)} \le \frac{1}{2}
\end{equation}
and a Neumann series argument guarantees invertibility of  $\nablaH \rX$. 
Denoting by $\mathrm{Y}(t,\cdot)$ the inverse of $\rX(t,\cdot)$ we find that
\begin{equation*}
    \nablaH \mathrm{Y}(t,\rX(t,y_\H)) = [\nablaH \rX]^{-1}(t,y_\H). 
\end{equation*}
Using $\dt \nablaH \rX(t,\cdot) = \nablaH b(t,\cdot)$, we obtain 
\begin{equation}\label{eq:timederX}
   \begin{aligned}
        \quad\| \dt \nablaH \rX(t,\cdot) \|_{\rL^2(0,\tau;\rH^{3}(\T^2))} \le  C \eps,
   \end{aligned}
\end{equation}
and therefore there is a constant $C > 0$ such that
\begin{equation}\label{eq:est of nablaH X in W1p and Linfty}
    \| \nablaH \rX \|_{\rH^{1}(0,\tau;\rH^{3}(\T^2))} + \| \nablaH \rX \|_{\rL^\infty(0,\tau;\rH^{3}(\T^2))} \le C.
\end{equation}
Since $\rH^{1}(0,\tau;\rH^{3}(\T^2)) \cap\rL^\infty(0,\tau;\rH^{3}(\T^2))$ is a Banach algebras with respect to point wise multiplication.
As a consequence of \eqref{eq:est of nablaH X in W1p and Linfty} we deduce the existence of a constant $C > 0$ such that 
\begin{equation*}
    \begin{aligned}
        \| \det \nablaH \rX \|_{\rH^{1}(0,\tau;\rH^{3}(\T^2))} + \| \det \nablaH \rX \|_{\rL^\infty(0,\tau;\rH^{3}(\T^2))} 
        &\le C \text{ and }\\
        \| \Cof\, \nablaH \rX \|_{\rH^{1}(0,\tau;\rH^{3}(\T^2))} + \| \Cof\, \nablaH \rX \|_{\rL^\infty(0,\tau;\rH^{3}(\T^2))} 
        &\le C.
    \end{aligned}
\end{equation*}
Thanks to \eqref{eq:est of nablaHX - Id in Linfty}, we find that $\det \nablaH \rX \ge C > 0$ on $(0,\tau) \times \T^2$ for some constant $C > 0$ provided $\eps \in (0,\alpha)$.
The representation
\begin{equation}\label{eq:ex of nablaH Y sea ice para-hyper}
    \mathrm{Z} := [\nablaH \rX]^{-1} = \frac{1}{\det \nablaH \rX} (\Cof \nablaH \rX)^\top
\end{equation}
then results in
\begin{equation*}
    \|  \mathrm{Z} \|_{\rH^{1}(0,\tau;\rH^{3}(\T^2))} + \| \mathrm{Z} \|_{\rL^\infty(0,\tau;\rH^{3}(\T^2))}\ \le C \  \text{ and }\ \| \mathrm{Z} - \mathrm{I}_2\|_{\rL^\infty(0,\tau;\rH^{3}(\T^2))} \leq C\eps.
\end{equation*}
The above properties of the Lagrangian transformation $\rX$ are summarized in the following.
\begin{lem}\label{lem:ests of trafo}
Let $\tau>0$ and assume that $(v, \Theta) \in \E^v_1(0,\tau) \times \E^\Theta_1(0,\tau)$ and 
\begin{equation*}
        \| v \|_{\E_1(0,\tau)} + \| \Theta - \Theta^\ast \|_{\E_1(0,\tau)} \leq \eps,
\end{equation*}
for some constants $\Theta^\ast, \eps = \eps(\tau, \Theta^\ast, \Omega)>0$.
  Denote by $\rX$ the flow associated to the field $b$ as made precise in \eqref{eq:flow}.

\begin{enumerate}[(a)]
    \item  There is a constant $C > 0$ such that
    \begin{equation*}
        \sup_{t \in (0,\tau
    )} \| \nabla \rX(t,\cdot) - \mathrm{I}_2 \|_{\rW^{1,\infty}(\T^2)} + \sup_{t \in (0,\tau)} \| \nabla \rX(t,\cdot) - \mathrm{I}_2 \|_{\rH^{3}(\T^2)} \le C \eps
    \end{equation*}
    If $\eps\in (0,\alpha)$, we especially have
    \begin{equation*}
        \| \nabla \rX(t,\cdot) - \mathrm{I}_2 \|_{\rL^\infty(0,\tau;\rW^{1,\infty}(\T^2))} \le \frac{1}{2},
    \end{equation*}
    so $\nabla \rX(t,\cdot)$ is invertible, and $\mathrm{Z} = [\nabla \rX]^{-1}$ is thus well-defined.
    \item Similar estimates as in (a) are also valid for $\mathrm{Z}$ as well as $\mathrm{Z}^\top$, i.\ e.,
    \begin{equation*}
        \begin{aligned}
            \sup_{t \in (0,\tau)} \|\mathrm{Z}(t,\cdot) - \mathrm{I}_2 \|_{\rH^{3}(\T^2)} + \sup_{t \in (0,\tau)} \| \mathrm{Z}^\top(t,\cdot) - \mathrm{I}_2 \|_{\rH^{3}(\T^2)} 
            &\le C \eps \ \text{ and }\\
            \| \mathrm{Z} \|_{\rL^\infty(0,\tau;\rH^{3}(\T^2))} + \| \mathrm{Z}^\top \|_{\rL^\infty(0,\tau;\rH^{3}(\T^2))} 
            &\le C
        \end{aligned}
    \end{equation*}
    holds true for some constant $C = C(\T^2,\tau) > 0$.
    % \begin{equation*}
    %     \sup_{t \in (0,\tau)} \| Z(t,\cdot) - \mathrm{Id}_3 \|_{\rH_x^{1,q}} + \sup_{t \in (0,\tau)} \| Z^\top(t,\cdot) - \mathrm{Id}_3 \|_{\rH_x^{1,q}} \le C \eps^2, \| Z \|_{\rL_\tau^\infty(\rH_x^{1,q})} \le C,  \| Z^\top \|_{\rL_\tau^\infty(\rH_x^{1,q})} \le C
    % \end{equation*}
    \item For all $j$, $k$, $l = 1,2$, there is $C = C(\T^2,\tau) > 0$ with
    \begin{equation*}
        \Bigl\| \frac{\partial \mathrm{Z}_{l,j}}{\partial y_k} \Bigr\|_{\rL^\infty(0,\tau;\rH^2(\T^2))} \le C \eps.
    \end{equation*}
    %\item there is a constant $C = C(p,q,\Omega,\tau) > 0$ so that the estimate
   % \begin{equation*}
     %   \| Z - \Id_3 \|_{\rC_\tau^{0,\nicefrac{1}{p'}}(\rH_x^{1,q})} \le C \eps
    %\end{equation*}
    %is valid, where $p' \in (1,2)$ is the H\"older conjugate of $p$, i.\ e., $\nicefrac{1}{p}+\nicefrac{1}{p'}=1$.
\end{enumerate}
\end{lem}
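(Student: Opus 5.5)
The proof follows the computations preceding the statement, which I would organize in three steps.

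\emph{Step 1: bounds on $\Theta$, $\hat B(\Theta)$ and $b$.} Since $\E^\Theta_1(0,\tau)\hookrightarrow \rL^\infty(0,\tau;\rH^3(\Omega))\hookrightarrow \rL^\infty(0,\tau;\rW^{1,\infty}(\Omega))$, the smallness $\eps<\Theta^\ast/(2C_{\mathrm{emb}})$ gives $\Theta^\ast/2\le\Theta\le 3\Theta^\ast/2$. In this range the maps $\Theta\mapsto\Theta^{-1}$, $\Theta\mapsto\int_0^z\Theta^{-1}$ and $\sigma\mapsto(1-e^{-\sigma})^{-1}$ are smooth and bounded with all derivatives. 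Composition (Moser-type) estimates in $\rL^2(0,\tau;\rH^4)\cap\rH^1(0,\tau;\rH^2)$ then give
\[
\|\hat B(\Theta)-\hat B(\Theta^\ast)\|_{\E^\Theta_1}\le C\eps .
\]
The vertical integral $\int_0^z$ is harmless, since it is bounded on each $\rH^k(\Omega)$.

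Next, $\rH^3(\T^2)$ is an algebra, and the trace-in-$z$ bound $\sup_\eta\|f(\cdot,\eta)\|_{\rH^3(\T^2)}\lesssim\|f\|_{\rH^4(\Omega)}$ holds; integrating in $\eta$ against $\rH^3(\T^2)$ norms of slices is controlled by the $\rH^3(\Omega)$ norm. The key structural point is that $v$ itself is small, so $b$ is small even though $\hat B(\Theta^\ast)$ is not. Together these give
\[
\|\nablaH b\|_{\rL^2(0,\tau;\rH^3(\T^2))}\le C\|\hat B(\Theta)\|_{\E^\Theta_1}\|v\|_{\E^v_1}\le C\eps .
\]
Here $\|\hat B(\Theta)\|_{\E^\Theta_1}$ is bounded by a constant, because $\hat B(\Theta^\ast)(z)$ is smooth in $z$. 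In particular $b\in\rL^2(0,\tau;\rW^{1,\infty})$, so the ODE \eqref{eq:flow} has a unique global flow of class $\rC^1$ by Cauchy--Lipschitz.

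\emph{Step 2: part (a).} Differentiating \eqref{eq:diffeo X Euler Lagrange sea ice para-hyper} in space gives
\[
\nablaH\rX(t)-\mathrm I_2=\int_0^t(\nablaH b)(s,\rX(s))\,\nablaH\rX(s)\,\d s .
\]
This is the main obstacle: composition with $\rX(s)$ and multiplication by $\nablaH\rX$ must be controlled in $\rH^3(\T^2)$, and the plain estimate $\int_0^t\|\nablaH b\|_{\rH^3}$ in the text hides this. I would run a continuity (bootstrap) argument on
\[
\phi(t):=\sup_{s\le t}\|\nablaH\rX(s)-\mathrm I_2\|_{\rH^3}.
\]
While $\phi\le 1$, $\rX(s)$ is a diffeomorphism with Jacobian bounded above and below. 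The chain rule plus change of variables then gives
\[
\|f\circ\rX\|_{\rH^3}\le C\big(1+\|\nablaH\rX-\mathrm I_2\|_{\rH^3}\big)^3\|f\|_{\rH^3},
\]
and the algebra property handles the product. Hence
\[
\phi(t)\le C\int_0^t\|\nablaH b\|_{\rH^3}\,(1+\phi)^4\,\d s\le C\tau^{1/2}\eps\,(1+\phi(t))^4 ,
\]
so $\phi\le 2C\tau^{1/2}\eps$ for $\eps$ small. The $\rW^{1,\infty}$ bound follows from $\rH^3(\T^2)\hookrightarrow\rW^{1,\infty}(\T^2)$. Shrinking $\eps$ below $\alpha$ yields the bound $\tfrac12$, and invertibility of $\nablaH\rX$ follows from the Neumann series. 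Differentiating in time, $\dt\nablaH\rX=(\nablaH b)\circ\rX\,\nablaH\rX$, which also gives \eqref{eq:timederX}.

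\emph{Step 3: parts (b) and (c).} For (b), use $\mathrm Z=(\det\nablaH\rX)^{-1}(\Cof\nablaH\rX)^\top$. Both $\det$ and $\Cof$ are polynomial in the entries, so the algebra property gives $\rL^\infty(0,\tau;\rH^3)$ bounds. From (a) and the Neumann series, $\det\nablaH\rX\ge c>0$, so $1/\det$ is a smooth function of a bounded $\rH^3$ quantity away from zero and hence bounded in $\rH^3$. For the smallness, write
\[
\mathrm Z-\mathrm I_2=\mathrm Z\,(\mathrm I_2-\nablaH\rX),
\]
so $\|\mathrm Z-\mathrm I_2\|_{\rH^3}\le C\|\mathrm Z\|_{\rH^3}\,\|\nablaH\rX-\mathrm I_2\|_{\rH^3}\le C\eps$. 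The same holds for $\mathrm Z^\top$, since transposition is an isometry. Finally, (c) follows immediately: $\partial_{y_k}\mathrm Z_{l,j}=\partial_{y_k}(\mathrm Z-\mathrm I_2)_{l,j}$, and its $\rH^2$ norm is bounded by $\|\mathrm Z-\mathrm I_2\|_{\rH^3}\le C\eps$ uniformly in $t$.
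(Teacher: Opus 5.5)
Your proof is correct and follows the paper's argument: composition bounds for $\hat B(\Theta)$, the $\rL^2(0,\tau;\rH^3)$ bound on $\nablaH b$, integration of the flow equation, and the cofactor representation of $\mathrm{Z}$. Where you go beyond the paper, it is a genuine improvement. Your bootstrap on $\phi$ controls the composition $(\nablaH b)\circ\rX\,\nablaH\rX$, which the paper's estimate (and its formula $\dt\nablaH\rX=\nablaH b$) suppresses, and the identity $\mathrm{Z}-\mathrm{I}_2=\mathrm{Z}(\mathrm{I}_2-\nablaH\rX)$ proves the smallness of $\mathrm{Z}-\mathrm{I}_2$ and part (c), which the paper only asserts.
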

\noindent
With the estimate of the transformation at hand, we introduce the new unknowns following the characteristics $\rX$. For $ \varrhobar^\ast, \Theta^\ast>0$ define
\begin{equation*}
    \varrhobar^\rL(t,\cdot) := \varrhobar(t,\rX(t,\cdot)) - \varrhobar^\ast, \quad v^\rL(t,\cdot,z) := v(t,\rX(t,\cdot),z) , \ \text{ and }\ \Theta^\rL(t,\cdot,z):= \Theta(t,\rX(t,\cdot),z)- \Theta^\ast.
\end{equation*}
The transformed functional $\hat{B}^\rL$ is then defined diagnostically by 
\begin{equation*}
  \hat{B}^\rL(t,\cdot,z) := \hat{B}(\Theta(t,\rX(t,\cdot),z))= \hat{B}\big (\Theta^\rL(t,\cdot,z) + \Theta^\ast \big ) =\hat{B}(\Theta^\ast) + \delta \hat{B} (\Theta^\rL, \Theta^\ast  )(t,\cdot,z),
\end{equation*}
where we set
\begin{equation*}
    \delta \hat{B}(\Theta^\rL,  \Theta^\ast  ) (t,\cdot, z):= \hat{B}  ( \Theta^\rL (t,\cdot,z)+ \Theta^\ast ) -\hat{B}(\Theta^\ast).
\end{equation*}
Note that  the map $\Theta \mapsto \hat{B}(\Theta)$ is composed of smooth scalar functions and bounded linear operators, all of which are Fr\'echet differentiable in the regime where $\Theta$ is uniformly bounded away from $0$. Denoting by $D\hat{B}$ the Fr\'echet derivative and using that $\hat{B}(\Theta^\ast)$ only depends on the vertical variable, we obtain
\begin{equation*}
    \nablaH \hat{B} ( \Theta^\rL + \Theta^\ast) = (D\hat{B})(\Theta^\rL + \Theta^\ast) \cdot \nablaH \Theta^\rL = \big [(D\hat{B})(\Theta^\ast)+ \delta (D \hat{B})(\Theta^\rL,\Theta^\ast)\big] ( \nablaH \Theta^\rL),
\end{equation*}
setting similar to above 
\begin{equation*}
    \delta( D \hat{B})(\Theta^\rL,\Theta^\ast) := (D\hat{B})(\Theta^\rL+\Theta^\ast) - D \hat{B}(\Theta^\ast).
\end{equation*}
Note  that the map $\Theta \mapsto \hat{B}(\Theta)$ is locally Lipschitz on the space $\E^\Theta_1 (0,\tau)$ in the regime where 
\begin{equation}\label{eq: regime}
    \frac{\Theta^\ast}{2} \leq \Theta\leq \frac{3\, \Theta^\ast}{2} \ \text{ for all }\ (t,x,y,z) \in [0,\tau] \times \Omega. 
\end{equation}
The Lipschitz estimate then yields
\begin{equation*}
   \| \delta \hat{B}(\Theta^\rL,  \Theta^\ast  ) \|_{\E^\Theta_1(0,\tau)} = \| \hat{B}  ( \Theta^\rL + \Theta^\ast ) - \hat{B}(\Theta^\ast) \|_{\E^\Theta_1(0,\tau)} \leq C \| \Theta^\rL \|_{\E^\Theta_1(0,\tau)},
\end{equation*}
for a constant $C=C(\Theta^\ast)>0$.
To obtain an analogous bound for the operator $\delta (D \hat{B})$ we first calculate the Fr\'echet derivative $(D\hat{B})(\Theta)$ explicitly. Setting
\begin{equation*}
    A(\Theta)(z) := \int_0^z \frac{1}{\Theta(\cdot, \eta)} \d \eta, \quad I(\Theta) := 1- \mathrm{exp}\big (-A(\Theta)(1) \big ) \ \text{ and }\ N(\Theta)(z):= \frac{\mathrm{exp}\big ( -A(\Theta)(z)\big )}{\Theta(z)}
\end{equation*}
so that $\hat{B}(\Theta) = N(\Theta)(z) / I(\Theta)$, a direct differentiation gives, for any direction $h=h(z)$
\begin{equation*}
  \begin{aligned}
        (D\hat{B})(\Theta)[h](z) &= \frac{\mathrm{exp}\big ( -A(\Theta)(z)\big )}{I(\Theta)} \Big ( - \frac{h(z)}{\Theta^2} + \frac{1}{\Theta} \int_0^z \frac{h(\eta)}{\Theta(\cdot, \eta)^2} \d \eta \Big) \\&\quad+ \frac{N(\Theta)(z)}{I(\Theta)^2} \,\mathrm{exp}\big (-A(\Theta)(1) \big ) \int_0^1 \frac{h(\eta)}{\Theta(\cdot, \eta)^2} \d \eta.
  \end{aligned}
\end{equation*}
So $ (D\hat{B})(\Theta)$ is a sum of multiplication operator and two $z$-integrals. Moreover, the linearized operator $(D\hat{B})(\Theta^\ast)$ is explicitly given by 
\begin{equation}\label{eq: DB lin}
    (D\hat{B})(\Theta^\ast)[h](z) = \hat{B}(\Theta^\ast) \Big ( - \frac{h(z)}{\Theta^\ast} + \frac{1}{(\Theta^\ast)^2} \int_0^z h(\eta) d\eta + \frac{\mathrm{exp}(-1/\Theta^\ast)}{(\Theta^\ast)^2 (1- \mathrm{exp}(-1/\Theta^\ast))} \int_0^1 h(\eta) \d \eta \Big ).
\end{equation}
Notice that the maps
\begin{equation*}
    \Theta \mapsto \Theta^{-1}, \ \Theta^{-2}, \ \mathrm{exp}(-A(\Theta)(\cdot)) , \ I(\Theta)^{-1} \ \text{ and } \ I(\Theta)^{-2} 
\end{equation*}
are all locally Lipschitz on $\E_1(0,\tau)$ in the regime \eqref{eq: regime} and multiplication and integral operators are bounded, which means that $(D\hat{B})(\Theta)$ is bounded. In particular, we obtain 
\begin{equation*}
    \| \delta (D \hat{B})(\Theta^\rL,\Theta^\ast) \|_{\mathrm{op}} = \| (D\hat{B} ) ( \Theta^\rL + \Theta^\ast ) - (D\hat{B})(\Theta^\ast) \|_{\mathrm{op}} \leq C \| \Theta^\rL \|_{\E^\Theta_1(0,\tau)},
\end{equation*}
for a constant $C=C(\Theta^\ast)>0$.
With the above convention, we write
\begin{equation*}
    b^\rL(t,\cdot):= b(t,\rX(t,\cdot)) = \int_0^1\hat{B}(\Theta^\ast) \, v^\rL(t,\cdot,z) \d z + \int_0^1    \delta \hat{B}(\Theta^\rL,  \Theta^\ast  ) (t,\cdot, z) \, v^\rL(t,\cdot,z) \d z
\end{equation*}
and 
\begin{equation}\label{eq: divH b}
    \divH b^\rL  = \int_0^1 \Big [ \Big(\big ((D\hat{B})(\Theta^\ast) + \delta ( D\hat{B})(\Theta^\rL, \Theta^\ast)\big ) (\nablaH \Theta^\rL) \Big) \,v^\rL + \big ( \hat{B}(\Theta^\ast) + \delta \hat{B}( \Theta^\rL, \Theta^\ast) \big ) \,\divH v^\rL \Big ](\cdot,z) \d z
\end{equation}
Finally, the transformed vertical velocity $w^\rL(t,\cdot,z):= w(t,\rX(t,\cdot),z)$ is given by transforming the non-local formula \eqref{eq:W-noQt}. First, notice that
\begin{equation*}
    \dt \hat{B} + \divH (\hat{B} \,v ) - \hat{B}\, \divH b = \dt \hat{B} + b \cdot \nablaH \hat{B} + \divH \big ( \hat{B}\, ( v-b) \big )
\end{equation*}
and therefore 
\begin{equation}\label{eq:wL_rhobar_hatB}
\begin{aligned}
(\varrhobar \, \hat{B} \, w)^{\rL}
&=
-
\int_{0}^{z}
\Big[
\hat{B}^\rL ( v^\rL - b^\rL) \cdot \mathrm{Z}^\top \nablaH \varrhobar^\rL + (\varrhobar^\rL+ \varrhobar^\ast) \big ( \dt \hat{B}^\rL + \nablaH \big ( \hat{B}^\rL ( v^\rL - b^\rL) \big ) : \mathrm{Z}^\top 
\Big](\cdot, \eta)\,d\eta \\
&= -\int_0^z \Big [ \big (\hat{B}(\Theta^\ast) + \delta \hat{B}(\Theta^\rL, \Theta^\ast)\big ) \Big ( (v^\rL - b^\rL ) \cdot \mathrm{Z}^\top \nablaH \varrhobar^\rL  + (\varrhobar^\rL+\varrhobar^\ast) \,\nablaH (v^\rL - b^\rL ) : \mathrm{Z}^\top \Big ) \\ & \quad   +\Big (\varrhobar^\rL+\varrhobar^\ast \Big) \Big ((D\hat{B})(\Theta^\ast) + \delta ( D\hat{B})(\Theta^\rL, \Theta^\ast) \Big ) \,\Big ( \dt \Theta^\rL + \big ( \mathrm{Z}\,(v^\rL - b^\rL ) \big ) \cdot \nablaH \Theta^\rL \Big)\Big ](\cdot ,\eta) \d \eta.
\end{aligned}
\end{equation}
For the transformation of the vertical pressure work $p \,\dz w$ in the temperature equation, it is convenient to split the above diagnostic formulation of $(\varrhobar \, \hat{B} \, w)^{\rL}$ into two parts $(\varrhobar \, \hat{B} \, w)^{\rL} = J_1 +J_2$ of which the first $J_1$ collects terms of order one with respect to the solution and $J_2$ collects higher orders. Explicitly, we find that $J_1$ and $J_2$ are given by
\begin{equation}
    \begin{aligned}
        J_1 &= -\varrhobar^\ast \int_0^z \big [(D\hat{B})(\Theta^\ast)   (\dt \Theta^\rL) + \hat{B}(\Theta^\ast) \divH v^\rL  \big ] (\cdot , \eta) \d \eta \\ &\quad +\varrhobar^\ast \Big(\int_0^z \hat{B}(\Theta^\ast)(\cdot,\eta)\,\d\eta\Big)
\Big(\int_0^1[ \hat{B}(\Theta^\ast)\, \divH v^\rL\big ](t,\cdot,\zeta)\,\d\zeta\Big), \\
        J_2 &=  -\int_0^z \Big [ \big (\hat{B}(\Theta^\ast) + \delta \hat{B}(\Theta^\rL, \Theta^\ast)\big ) \Big ( (v^\rL - b^\rL ) \cdot \mathrm{Z}^\top \nablaH \varrhobar^\rL  + (\varrhobar^\rL+\varrhobar^\ast) \,\nablaH (v^\rL - b^\rL ) : \big (\mathrm{Z}^\top -\mathrm{I}_2 \big )\Big )\\&\quad +\hat{B}(\Theta^\ast)     \varrhobar^\rL \,\divH (v^\rL - b^\rL ) + \delta \hat{B}(\Theta^\rL, \Theta^\ast)  \varrhobar^\rL    \,\divH (v^\rL - b^\rL ) 
        + \delta \hat{B}(\Theta^\rL, \Theta^\ast)\varrhobar^\ast \,\divH (v^\rL - b^\rL ) \\ &\quad +\hat{B}(\Theta^\ast)     \varrhobar^\ast \Big (\divH v^\rL -\Big(\big ((D\hat{B})(\Theta^\ast) + \delta ( D\hat{B})(\Theta^\rL, \Theta^\ast)\big ) (\nablaH \Theta^\rL) \Big) \,v^\rL + \big (  \delta \hat{B}( \Theta^\rL, \Theta^\ast) \big ) \,\nablaH v^\rL \Big )
        \\ & \quad   +\varrhobar^\rL \Big ((D\hat{B})(\Theta^\ast) + \delta ( D\hat{B})(\Theta^\rL, \Theta^\ast) \Big ) \,\Big ( \dt \Theta^\rL + \big ( \mathrm{Z}\,(v^\rL - b^\rL ) \big ) \cdot \nablaH \Theta^\rL \Big)\\ &\quad +\varrhobar^\ast \Big ((D\hat{B})(\Theta^\ast) + \delta ( D\hat{B})(\Theta^\rL, \Theta^\ast) \Big ) \,\Big (  \mathrm{Z}\,(v^\rL - b^\rL ) \cdot \nablaH \Theta^\rL \Big)
        \Big ](\cdot ,\eta) \d \eta 
    \end{aligned}\label{eq J_1 and J_2}
\end{equation}
To transform the vertical pressure work $p \, \dz w$, we first observe that by hydrostatic balance the following identity holds
\begin{equation*}
    p \, \dz w = \dz (p w) + \varrho w = (\dz \Theta + 1) \varrho w + \Theta \, \dz (\varrho w)
\end{equation*}
and therefore 
\begin{equation*}
   \begin{aligned}
        p \, \dz w &\rightsquigarrow (\dz \Theta^\rL + 1) (J_1 + J_2) + (\Theta^\rL + \Theta^\ast) \dz (J_1 + J_2) \\&= J_1 + \Theta^\ast \dz J_1 + \dz \Theta^\rL J_1+ (\dz \Theta^\rL + \Theta^\ast + 1) J_2 +\Theta^\ast \dz J_2 + \Theta^\rL \dz J_1. 
   \end{aligned}
\end{equation*}
Note that only the first two addends $ J_1 + \Theta^\ast \dz J_1 $ are of order one in terms of the solution and therefore must be accounted for in the the left-hand side. Using the formula for $J_1$ from \eqref{eq J_1 and J_2}, we see that there is a nonlocal contribution to the time derivative given by the operator
\begin{equation*}
    -\varrhobar^\ast \Big ( \Theta^\ast (D\hat{B})(\Theta^\ast)(\cdot ) + \int_0^z (D\hat{B})(\Theta^\ast) (\cdot ) \d \eta \Big ) = \varrhobar^\ast \Big ( \hat{B}(\Theta^\ast) (\cdot ) - \frac{\mathrm{exp}(-1/\Theta^\ast)}{(\Theta^\ast)^2 (1- \mathrm{exp}(-1/\Theta^\ast))^2} \int_0^1 (\cdot) \d \eta \Big),
\end{equation*}
where we used \eqref{eq: DB lin}. With the above consideration, we are now in a position to formulate the transformed system which is given by
\begin{equation}
	\left\{
	\begin{aligned}
        \dt \varrhobar^\rL + \bar{\varrho}^\ast \int_0^1 \big[\hat{B}(\Theta^\ast) \, \divH v^\rL\big](\cdot, \eta ) \d \eta  &=f_1, &&\text{ in }\T^2_\tau, \\
\dt v^\rL - \frac{\mu\, \Delta v^\rL}{\varrhobar^\ast \,\hat{B}(\Theta^\ast)} - \frac{\mu'\, \nablaH \divH v^\rL}{\varrhobar^\ast \,\hat{B}(\Theta^\ast)}	+ \Theta^\ast\frac{\nablaH \varrhobar^\rL}{\varrhobar^\ast} +\Big ( \Theta^\ast\frac{ (D\hat{B})(\Theta^\ast)}{\hat{B}(\Theta^\ast)} + \mathrm{I}_2 \Big ) (\nablaH \Theta^\rL) &= f_2, &&\text{ in } \Omega_\tau,  \\
%		  \dz \zeta  &= 0  , &&\text{ on } \Omegaair \times (0,T), \\ %\frac{1}{p} , &&\text{ on } \Omegaair \times (0,T),\\
%		\div \uair &= 0, &&\text{ on } \Omegaair \times (0,T),\\
%        \bigl (\partial_p \vair,\omegaair \bigr ) &= (0, 0), &&\text{ on } \Gammaairu \times (0,T), \\
%        \omegaair  &= 0, &&\text{ on } \Gammaairb \times (0,T), \\
%         \partial_p \vair  &= p_s^{-1} (\vocn - \vair) |\vocn - \vair|, &&\text{ on } \Gammaairb \times (0,T), \\ %\frac{(\vocn - \vair) |\vocn - \vair|}{p_s}, &&\text{ on } \Gammaairb \times (0,T), \\
        %p &=  \rho^\gamma , &&\text{ on } \Omega \times (0,T),  \\
        %\dt \vocn - \Delta \vocn + \uocn \cdot \nabla \vocn + \nablaH \pi &= f^{\ocn}, &&\text{ on } \Omegaocn \times (0,T), \\
        %\dz \pi &= - \rhoocn g, &&\text{ on } \Omegaocn \times (0,T), \\
        %\div \uocn &= 0, &&\text{ on } \Omegaocn \times (0,T). \\
%\dz        V^\air &=0, &&\text{ on } \Gammaairu \times (0,T), \\
     %   \dz V^\air &= B_1(V^\air, V^\ocn) , &&\text{ on } \Gammaairb \times (0,T), \\
       \mathcal{L}[\dt \Theta^\rL]  - \frac{\Delta \Theta^\rL}{\varrhobar^\ast \,\hat{B}(\Theta^\ast)}  \\ - \frac{ \int_0^z \hat{B}(\Theta^\ast) \divH v^\rL(\cdot,\eta) \d \eta}{\hat{B}(\Theta^\ast)}+\Theta^\ast \mathrm{exp}(z/\Theta^\ast)  
\int_0^1 \big[\hat{B}(\Theta^\ast) \, \divH v^\rL\big](\cdot, \eta ) \d \eta &= f_3, &&\text{ in } \Omega_\tau,  \\
		  %\dz \pi  &= 1, &&\text{ on } \Omegaocn \times (0,T), \\ %\frac{1}{p} , &&\text{ on } \Omegaair \times (0,T),\\
		%\div_\H  \bar{V}^{\ocn} &=F_4(V^\ocn), &&\text{ in } G \times (0,T),\\
     %  \dz  V^\ocn &= 0 &&\text{ on } \Gab \times (0,T), \\
     %   \dz V^\ocn &= B_2(\zeta, V^\air,V^\ocn), &&\text{ on } \Gau \times (0,T), \\
%        , \\
%        (\dz \vocn, \wocn) &=(0, 0), &&\text{ on } \Gamma_b \times (0,T), \\
     \varrhobar^\rL(0) = \varrhobar_0- \varrhobar^\ast, \ v^\rL(0) = v_0 \ \Theta^\rL(0) = \Theta_0 - \Theta^\ast &
	\end{aligned}
	\right. 
	\label{eq: full CPE Lagrange}
\end{equation}
where the nonlocal operator $\mathcal{L}$ is given by 
\begin{equation*}
    \mathcal{L}[h] := 2 h -  \frac{\mathrm{exp}\big ( (z-1)/\Theta^\ast\big )}{\Theta^\ast ( 1 - \mathrm{exp}(-1/\Theta^\ast))} \int_0^1 h(\cdot ,\eta) \d \eta.
\end{equation*}
Furthermore, the nonlinear remainders $(f_1, f_2,f_3)(\varrhobar^\rL,v^\rL, \Theta^\rL)$  are given by 
\begin{equation*}
    \begin{aligned}
         f_1(\varrhobar^\rL, v^\rL, \Theta^\rL) &=- \int_0^1 \Big [ \Big(\big (D\hat{B} + \delta ( D\hat{B}) \big ) ( \nablaH \Theta^\rL) \Big) \,v^\rL +  \delta \hat{B}  \,\divH v^\rL \Big ](\cdot,\eta) \d \eta \\&\quad  -\varrhobar^\rL \divH b^\rL - (\varrhobar^\rL + \varrhobar^\ast) \nablaH b^\rL : \left (\mathrm{Z}^\top  - \mathrm{I}_2 \right ),
    \end{aligned}
\end{equation*}
and
\begin{equation*}
    \begin{aligned}
        &\quad \big (f_2(\varrhobar^\rL, v^\rL, \Theta^\rL) \big )_i\\ &= -\Big (\frac{\varrhobar^\rL \,\delta\hat{B}
        }{\varrhobar^\ast \,\hat{B}(\Theta^\ast)} + \frac{\varrhobar^\rL}{\varrhobar^\ast} + \frac{\delta\hat{B}} {\hat{B}(\Theta^\ast)}\Big )\, \dt v_i^\rL - \frac{(\varrhobar^\rL + \varrhobar^\ast) ( \hat{B}(\Theta^\ast) +\delta \hat{B}) \, \big ( \mathrm{Z}(v^\rL - b^\rL) \cdot \nablaH \big ) v^\rL}{\varrhobar^\ast \,\hat{B}(\Theta^\ast)}\\& \quad + \frac{\mu}{\varrhobar^\ast \,\hat{B}(\Theta^\ast)}\Bigl ( \sum_{j,k,l} \frac{\partial^2 v_i^\rL}{\partial y_k \partial y_l} \bigl (\mathrm{Z}_{k,j} - \delta_{k,j}\bigr ) \mathrm{Z}_{l,j} + \sum_{k,l}\frac{\partial^2 v_i^\rL}{\partial y_k \partial y_l}\bigl (\mathrm{Z}_{l,k}- \delta_{l,k}\bigr ) +  \sum_{j,k,l} \mathrm{Z}_{l,j} \frac{\partial v_i^\rL}{\partial y_k} \frac{\partial \mathrm{Z}_{k,j}}{\partial y_l} \Bigr ) \\ &\quad  +\frac{\mu'}{\varrhobar^\ast \,\hat{B}(\Theta^\ast)}
        \Bigl (\sum_{l,k,j} \frac{\partial^2 v^\rL_j}{\partial y_k \partial y_l}\bigl ( \mathrm{Z}_{l,i}- \delta_{l,i}\bigr )\mathrm{Z}_{k,j}  +  \sum_{k,l}\frac{\partial^2 v^\rL_l}{\partial y_k \partial y_l} \bigl ( \mathrm{Z}_{l,i}- \delta_{l,i}\bigr ) +  \sum_{j,k,l} \mathrm{Z}_{l,i} \frac{\partial v^\rL_j}{\partial y_k} \frac{\partial \mathrm{Z}_{k,j}}{\partial y_l} \Bigr ) \\
        &\quad - \frac{\Theta^\ast \, \big ( ( \mathrm{Z} - \mathrm{I}_2 )\nablaH \varrhobar^\rL\big )_i}{\varrhobar^\ast} - \frac{\delta\hat{B}\, \Theta^\rL \, \big (\mathrm{Z}^\top \nablaH \varrhobar^\rL\big)_i}{\varrhobar^\ast \,\hat{B}(\Theta^\ast)} - \frac{\delta\hat{B}\, \Theta^\ast \,  \big (\mathrm{Z}^\top \nablaH \varrhobar^\rL\big)_i}{ \varrhobar^\ast \,\hat{B}(\Theta^\ast)} - \frac{\Theta^\rL \, \big (\mathrm{Z}^\top \nablaH \varrhobar^\rL\big)_i}{\varrhobar^\ast}  \\&\quad  -\Big ((D\hat{B})(\Theta^\ast)+ \delta (D \hat{B})\Big)\Big ( (\mathrm{Z}-\mathrm{I}_2) \nablaH \Theta^\rL + \frac{\varrhobar^\rL \, \Theta^\rL \, \mathrm{Z}^\top  \nablaH \Theta^\rL}{\varrhobar^\ast \,\hat{B}(\Theta^\ast)} +\frac{\varrhobar^\rL\, \Theta^\ast \, \mathrm{Z}^\top \nablaH \Theta^\rL}{\varrhobar^\ast \,\hat{B}(\Theta^\ast)} + \frac{\Theta^\rL \, \mathrm{Z}^\top \nablaH \Theta^\rL }{\hat{B}(\Theta^\ast)} \Big )_i
        \\ &\quad -\delta \hat{B} (\Theta^\rL, \Theta^\ast  ) ( \nablaH \Theta^\rL)_i - (\mathrm{Z}-\mathrm{I}_2) \nablaH \Theta^\rL - \frac{\varrhobar^\rL \, \delta \hat{B} \, (\mathrm{Z}^\top \nablaH \Theta^\rL)_i}{\varrhobar^\ast \,\hat{B}(\Theta^\ast)}- \frac{\varrhobar^\rL\,(\mathrm{Z}^\top \nablaH \Theta^\rL)_i}{\varrhobar^\ast} - \frac{\delta\hat{B}\, (\mathrm{Z}^\top \nablaH \Theta^\rL)_i}{\hat{B}(\Theta^\ast)} \\ 
        &\quad - \frac{(\varrhobar \, \hat{B} \, w)^{\rL} \, (\dz v^\rL)_i}{\varrhobar^\ast \,\hat{B}(\Theta^\ast)}
    \end{aligned}
\end{equation*}
as well as 
\begin{equation*}
    \begin{aligned}
         &\quad f_3(\varrhobar^\rL, v^\rL, \Theta^\rL) \\ &= -\Big (\frac{\varrhobar^\rL \,\delta\hat{B}
        }{\varrhobar^\ast \,\hat{B}(\Theta^\ast)} + \frac{\varrhobar^\rL}{\varrhobar^\ast} + \frac{\delta\hat{B}} {\hat{B}(\Theta^\ast)}\Big )\, \dt \Theta^\rL-\frac{(\varrhobar^\rL + \varrhobar^\ast) ( \hat{B}(\Theta^\ast) +\delta \hat{B}) \, \big ( \mathrm{Z}(v^\rL - b^\rL) \cdot \nablaH \big ) \Theta^\rL}{\varrhobar^\ast \,\hat{B}(\Theta^\ast)} \\& \quad + \frac{1}{\varrhobar^\ast \,\hat{B}(\Theta^\ast)}\Bigl ( \sum_{j,k,l} \frac{\partial^2 \Theta^\rL}{\partial y_k \partial y_l} \bigl (\mathrm{Z}_{k,j} - \delta_{k,j}\bigr ) \mathrm{Z}_{l,j} + \sum_{k,l}\frac{\partial^2 \Theta^\rL}{\partial y_k \partial y_l}\bigl (\mathrm{Z}_{l,k}- \delta_{l,k}\bigr ) +  \sum_{j,k,l} \mathrm{Z}_{l,j} \frac{\partial \Theta^\rL}{\partial y_k} \frac{\partial \mathrm{Z}_{k,j}}{\partial y_l} \Bigr )
         \\ 
        &\quad - \frac{(\varrhobar \, \hat{B} \, w)^{\rL} \, (\dz \Theta^\rL)}{\varrhobar^\ast \,\hat{B}(\Theta^\ast)} - \frac{ \dz \Theta^\rL J_1+ (\dz \Theta^\rL + \Theta^\ast + 1) J_2 +\Theta^\ast \dz J_2 + \Theta^\rL \dz J_1}{\varrhobar^\ast \,\hat{B}(\Theta^\ast)} -\Theta^\ast \nablaH v^\rL : \big ( \mathrm{Z}^\top - \mathrm{I}_2 \big ) \\&\quad- \Big ( \frac{\varrhobar^\rL \, \Theta^\rL}{\varrhobar^\ast} + \frac{\delta \hat{B}\, \varrhobar^\rL \, \Theta^\rL}{\varrhobar^\ast\, \hat{B}(\Theta^\ast)} +\Theta^\rL + \frac{\delta \hat{B}\, \Theta^\ast}{\varrhobar^\ast}+ \frac{\delta \hat{B}\, \Theta^\rL}{\hat{B}(\Theta^\ast)} + \frac{\delta \hat{B}\, \varrhobar^\rL \, \Theta^\ast}{\varrhobar^\ast\, \hat{B}(\Theta^\ast)} +\frac{\varrhobar^\rL\, \Theta^\ast}{\varrhobar^\ast} \Big ) \nablaH v^\rL :\mathrm{Z}^\top
     \end{aligned}
\end{equation*}
Finally, we note that the boundary conditions \eqref{eq: bc} are unaffected by the transformation as it only acts in horizontal direction.

\section{Linear Theory and nonlinear estimates}\label{sec: local}
\noindent
In this section, we study the linearized system \eqref{eq: full CPE Lagrange} that results from the transformation discussed in \autoref{sec: Lagrange} and estimate the nonlinear remainder terms $(f_1,f_2,f_3)$. We introduce some notation that will be used throughout the rest of this section.
Define the functions $\alpha$ and $\beta$ by 
\begin{equation*}
    \alpha(z) := \big (\varrhobar^\ast \, \hat{B}(\Theta^\ast)(z) \big )^{-1}  \ \text{ and }\ \beta(z) :=\frac{\mathrm{exp}\big ( (z-1)/\Theta^\ast\big )}{\Theta^\ast ( 1 - \mathrm{exp}(-1/\Theta^\ast))} 
\end{equation*}
as well as the operator $\mathcal{I}_z, \mathcal{A}$ and $\mathcal{L}$ for a regular function $f$ by 
\begin{equation*}
    \mathcal{I}_z f := \int_0^z \big [ \hat{B}(\Theta^\ast) f \big ](\cdot, \eta) \d \eta ,\quad  \mathcal{A}f := \Big ( \Theta^\ast\frac{ (D\hat{B})(\Theta^\ast)}{\hat{B}(\Theta^\ast)} + \mathrm{I}_2\Big )\nablaH f \ \text{ and }\   \mathcal{L}f = 2f - \beta(z) \int_0^1 f(\cdot, \eta)\d \eta .
\end{equation*}
Using these notation, we consider the fully linearized problem associated to \eqref{eq: full CPE Lagrange} given by
\begin{equation}
	\left\{
	\begin{aligned}
        \dt \xi + \bar{\varrho}^\ast\,  \mathcal{I}_1 ( \divH V)  &=g_1, &&\text{ in }\T^2_\tau, \\
\dt V - \mu\, \alpha\, \Delta V - \mu'\, \alpha\, \nablaH \divH V + \nablaH \xi+ \mathcal{A}T &= g_2, &&\text{ in } \Omega_\tau,  \\
%		  \dz \zeta  &= 0  , &&\text{ on } \Omegaair \times (0,T), \\ %\frac{1}{p} , &&\text{ on } \Omegaair \times (0,T),\\
%		\div \uair &= 0, &&\text{ on } \Omegaair \times (0,T),\\
%        \bigl (\partial_p \vair,\omegaair \bigr ) &= (0, 0), &&\text{ on } \Gammaairu \times (0,T), \\
%        \omegaair  &= 0, &&\text{ on } \Gammaairb \times (0,T), \\
%         \partial_p \vair  &= p_s^{-1} (\vocn - \vair) |\vocn - \vair|, &&\text{ on } \Gammaairb \times (0,T), \\ %\frac{(\vocn - \vair) |\vocn - \vair|}{p_s}, &&\text{ on } \Gammaairb \times (0,T), \\
        %p &=  \rho^\beta , &&\text{ on } \Omega \times (0,T),  \\
        %\dt \vocn - \Delta \vocn + \uocn \cdot \nabla \vocn + \nablaH \pi &= f^{\ocn}, &&\text{ on } \Omegaocn \times (0,T), \\
        %\dz \pi &= - \rhoocn g, &&\text{ on } \Omegaocn \times (0,T), \\
        %\div \uocn &= 0, &&\text{ on } \Omegaocn \times (0,T). \\
%\dz        V^\air &=0, &&\text{ on } \Gammaairu \times (0,T), \\
     %   \dz V^\air &= B_1(V^\air, V^\ocn) , &&\text{ on } \Gammaairb \times (0,T), \\
       \mathcal{L}[\dt T]  - \alpha \,\Delta T  - \varrhobar^\ast\alpha \, \mathcal{I}_z ( \divH V)+\Theta^\ast \mathrm{exp}(z/\Theta^\ast)  \;
\mathcal{I}_1 (\divH V) &= g_3, &&\text{ in } \Omega_\tau,  \\
		  %\dz \pi  &= 1, &&\text{ on } \Omegaocn \times (0,T), \\ %\frac{1}{p} , &&\text{ on } \Omegaair \times (0,T),\\
		%\div_\H  \bar{V}^{\ocn} &=F_4(V^\ocn), &&\text{ in } G \times (0,T),\\
     %  \dz  V^\ocn &= 0 &&\text{ on } \Gab \times (0,T), \\
     %   \dz V^\ocn &= B_2(\zeta, V^\air,V^\ocn), &&\text{ on } \Gau \times (0,T), \\
%        , \\
%        (\dz \vocn, \wocn) &=(0, 0), &&\text{ on } \Gamma_b \times (0,T), \\
     \xi(0)=\xi_0, \ V(0) = V_0, \ T(0) &= T_0,
	\end{aligned}
	\right. 
	\label{eq: full CPE Lagrange linear}
\end{equation}
with prescribed forcing terms $g_1, g_2$ and $g_3$ and boundary conditions \begin{equation}\label{eq: bc linear}
\partial_z V|_{z=0,1}=0 \ \text{ and }\ \partial_z T|_{z=0,1}=0.
\end{equation}
% Define the ground space $\rX_0$ by 
% \begin{equation*}
%     \rX_0:= \rL^p(0,T;\rLq(\Omega)) \times \rL^p(0,T;\rLq(\Omega;\R^2)) \times \rL^p(0,T;\rLq(\T^2))
% \end{equation*}
% and the domain $\rX_1$ by 
% \begin{equation*}
%     \rX_1 := \rH_\rN^{2,q}(\Omega) \times \rH_\rN^{2,q}(\Omega;\R^2) \times \rH^{2,q}(\T^2),
% \end{equation*}
% where the index $_\rN$ represents the Neumann boundary conditions, i.\ e., $\varrho \in \rH^{2,q}$ and $\dz \varrho|_{z=0,1}=0$. The \emph{data} and \emph{solution} spaces are then given by 
% \begin{equation*}
%     \E_0(0,T) := \rLp(0,T;\rX_0) \ \text{ and } \ \E_1(0,T) := \rLp(0,T;\rX_1) \cap \rH^{1,p}(0,T;\rX_0).
% \end{equation*}
% Next, we define the initial data space $\rX_\beta$, which is given by real interpolation of the ground space $\rX_0$ and the domain $\rX_1$, that is, 
% \begin{equation*}
%     \rX_\beta := ( \rX_0, \rX_1)_{1- \nicefrac{1}{p},p},
% \end{equation*}
% $(\cdot, \cdot)_{\theta,p}$ denotes the real interpolation functor for $\theta \in (0,1)$ and $p\in (1,\infty)$.
The next part of this section is devoted to proving a well-posedness result concerning the linearized system \eqref{eq: full CPE Lagrange linear} subject to the boundary condition \eqref{eq: bc linear}. Before stating the corresponding result we define the ground space $\rX_0 :=\rH^3(\T^2) \times \rH^2(\Omega)^2 \times \rH^2(\Omega) $ as well as the domain $\rX_1 := \rH^3(\T^2) \times \rH^4_\rN(\Omega)^2 \times \rH^4_\rN(\Omega) $, where the subscript $_\rN$ denotes Neumann boundary conditions, that is, $\rH^s_\rN (\Omega)  := \{ \rH^s(\Omega) \colon \dz f|_{z=0,1}=0 \}$ for suitable $s>0$ such that the trace is well defined. We then define the full data and solution spaces by 
\begin{equation*}
    \E_0(0,\tau):= \rL^2(0,\tau;\rX_0) \ \text{ and } \ \E_1(0,\tau) := \rL^2(0,\tau;\rX_1) \cap \rH^1(0,\tau;\rX_0).
\end{equation*}
We are now in a position to state the main result of this section.
\begin{lem}
    \label{lem: linear max reg}
    Let $\tau>0$ and $\varrhobar^\ast, \Theta^\ast >0$. Assume that 
    \begin{equation*}
        (\xi_0,V_0,T_0) \in \rH^3(\T^2) \times \rH^3(\Omega)^2 \times \rH^3(\Omega) \ \text{ and } \ (g_1,g_2,g_3) \in \E_0(0,\tau)
    \end{equation*}
    where the initial data is satisfying the compatibility conditions
    \begin{equation*}
        \partial_z V_0|_{z=0,1}=0 \ \text{ and }\ \partial_z T_0|_{z=0,1}=0.
    \end{equation*}
    Then there is a unique, strong solution $(\xi,V,T) \in \E_1(0,\tau) $ satisfying
    \begin{equation*}
        \|  (\xi,V,T)  \|_{\E_1(0,\tau)} \leq C \big ( \| (g_1,g_2,g_3)\|_{  \E_0(0,\tau)} + \| (\xi_0, V_0,T_0) \|_{ \rH^3(\T^2) \times \rH^3(\Omega)^2 \times \rH^3(\Omega) } \big )
    \end{equation*}
    for a constant $C=C(\tau, \varrhobar^\ast, \Theta^\ast,\Omega)>0$.
\end{lem}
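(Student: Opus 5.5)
The plan is to treat \eqref{eq: full CPE Lagrange linear} as a perturbation of a decoupled system. The principal part is a parabolic operator for $(V,T)$ with $z$-dependent coefficient $\alpha$, and the $\xi$-equation is a pure ODE in time. All coupling terms ($\bar\varrho^\ast\mathcal I_1(\divH V)$, $\nablaH\xi$, $\mathcal A T$ and the $\mathcal I_z$, $\mathcal I_1$ terms in the temperature equation) are of lower order relative to the regularity scale $\rX_1$. This holds because $\xi\in\rH^3(\T^2)$ while the velocity and temperature live in $\rH^4$. Concretely, $\divH V\in\rL^2(\rH^3)$ gives $\mathcal I_1(\divH V)\in\rL^2(0,\tau;\rH^3(\T^2))$, and $\nablaH\xi\in\rH^2$ and $\mathcal AT\in\rH^2$ belong to the ground space $\rX_0$ with norm controlled by a lower-order part of $\rX_1$. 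The operator $\mathcal A$ involves $(D\hat B)(\Theta^\ast)$ from \eqref{eq: DB lin}, which is a multiplication operator plus $z$-integrals and is therefore bounded on $\rH^s$.

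\textbf{Step 1 (inversion of $\mathcal L$).} I solve $\mathcal L f=g$ explicitly. Integrating over $z\in(0,1)$ gives $(2-\bar\beta)\bar f=\bar g$, where $\bar\beta=\int_0^1\beta\,\d z=1$ by a direct computation. Hence $\bar f=\bar g$ and
\[
f=\tfrac12\big(g+\beta\,\bar g\big).
\]
This shows that $\mathcal L^{-1}$ is bounded and commutes with horizontal derivatives. It also preserves $\rH^s_\rN$, since $\beta'(z)=\beta/\Theta^\ast$ does not vanish at the boundary. I must therefore check the Neumann condition carefully: $\dz(\mathcal L^{-1}g)=\tfrac12(\dz g+\beta'\bar g)$. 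Thus the domain of $A:=-\mathcal L^{-1}(\alpha\Delta)$ should be taken as $\{T\in\rH^4:\dz T|_{z=0,1}=0\}$ and viewed inside $\rX_0=\rH^2$. Applying $\mathcal L^{-1}$ to the temperature equation gives $\dt T+AT+\text{(lower order)}=\mathcal L^{-1}g_3$.

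\textbf{Step 2 (maximal regularity of the principal part).} The velocity operator $-\mu\alpha\Delta-\mu'\alpha\nablaH\divH$ with Neumann conditions on $\rH^2$ and domain $\rH^4_\rN$ is sectorial. I get this by conjugating with the positive smooth weight $\alpha$: the operator is a Lamé-type strongly elliptic operator, and the Neumann problem is decoupled horizontally by Fourier series on $\T^2$. Since $\rX_0$ is a Hilbert space, sectoriality with a resolvent bound suffices for $\rL^2$ maximal regularity (de Simon). For $A=\mathcal L^{-1}\circ(-\alpha\Delta)$ I use Weber's noncommuting-product result \cite{Weber:98}. The ingredients are that $-\alpha\Delta$ is sectorial, that $\mathcal L^{-1}$ is bounded, invertible and sectorial of angle $0$ (it is $\tfrac12$ plus a rank-one-in-$z$ perturbation whose spectrum is $\{\tfrac12,1\}$), and that the commutator $[\mathcal L^{-1},\alpha\Delta]$ satisfies the required relative bound. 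Since $\mathcal L^{-1}$ commutes with $\Delta_\H$, the commutator reduces to the $z$-direction, $\tfrac12[\beta\int_0^1\cdot\,,\alpha\dz^2]$, which is of order at most one after integration by parts in the integral. This is a lower-order perturbation and satisfies Weber's commutator condition. The $\xi$-equation is just $\dt\xi=g_1-\bar\varrho^\ast\mathcal I_1(\divH V)$, so $\xi\in\rH^1(\rH^3)$ once $V$ is known.

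\textbf{Step 3 (coupling).} I write the full operator as a block matrix on $\rX_0$ with domain $\rX_1$, consisting of the diagonal part $\mathrm{diag}(0,\text{Lamé},A)$ plus the off-diagonal part $B$. Every entry of $B$ is bounded from the interpolation space $(\rX_0,\rX_1)_{1/2}=\rH^3\times\rH^3\times\rH^3$ into $\rX_0$. For example, $\nablaH\xi$ needs $\xi\in\rH^3$, and $\mathcal I_1\divH V$ needs $V\in\rH^4$ mapping into $\rH^3(\T^2)$. The one delicate entry, $\mathcal I_1(\divH V)\in\rH^3$, is in fact a full-order map from $\rX_1$. However, the system is triangular: $\xi$ does not feed back into the $\xi$-equation, and $\nablaH\xi$ entering the $V$-equation is of order $\tfrac12$ relative to $V$. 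I therefore handle it by a short-time fixed point, or alternatively by the standard perturbation theorem for maximal regularity with relatively bounded perturbations of bound $0$. Uniqueness and the estimate on the fixed interval $(0,\tau)$ then follow by iterating over subintervals or by the usual exponential shift, since the problem is linear and $\tau$ is finite. The initial data lie in the trace space $\rH^3\times\rH^3_\rN\times\rH^3_\rN$ exactly under the stated compatibility conditions. I expect the main obstacle to be Step 2, namely the verification of Weber's commutator estimate for $\mathcal L^{-1}\alpha\Delta$ together with the correct boundary setting of its domain.
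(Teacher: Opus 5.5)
Your architecture matches the paper's:
\begin{itemize}
\item the explicit inverse $\mathcal L^{-1}=\tfrac12(\mathrm{Id}+\mathcal P)$, with $\mathcal P f=\beta\int_0^1 f\,\d\eta$;
\item the product $\mathcal L^{-1}(-\alpha\Delta)$ handled via \cite{Weber:98};
\item an upper-triangular principal part (keeping $\mathcal I_1(\divH\,\cdot)$ in it) plus a relatively bounded perturbation.
\end{itemize}

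\textbf{The commutator step fails.} The commutator does not reduce to the $z$-direction. $\mathcal L^{-1}$ commutes with $\Delta_\H$, but not with the $z$-dependent coefficient $\alpha$:
\[
[\mathcal L^{-1},\alpha\Delta]=\tfrac12[\mathcal P,\alpha]\,\Delta_\H+\tfrac12[\mathcal P,\alpha\dz^2],
\qquad
[\mathcal P,\alpha]f=\beta\int_0^1\alpha f\,\d\eta-\alpha\beta\int_0^1 f\,\d\eta\neq 0 .
\]
Only the second piece is lower order; the first has full order two. So $[\mathcal L^{-1},(\mu+B)^{-1}]=(\mu+B)^{-1}[B,\mathcal L^{-1}](\mu+B)^{-1}$ gains nothing over the trivial $O(|\mu|^{-1})$. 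A gain of that kind is exactly what a Labbas--Terreni/Weber-type condition has to supply.

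You also cannot fall back on the paper's verification. It uses only $\|AR_\mu-R_\mu A\|\le 2\|A\|\,\|R_\mu\|$ (i.e.\ $a=b=0$), which holds for every bounded $A$ and so encodes no structure. Take $A=\bigoplus_n A_0$ and $B=\bigoplus_n nB_0$ with $A_0=\begin{pmatrix}1&10\\0&1/2\end{pmatrix}=\tfrac12(\mathrm{Id}+P)$ for an oblique projection $P$, and $B_0=\begin{pmatrix}1&-0.9\\-0.9&1\end{pmatrix}$. These satisfy every property used there, yet $AB$ has eigenvalues $\approx -7.49\,n$.

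What actually makes the product work is that $\beta/\alpha\equiv c_0$ is constant. Then $M:=\alpha^{-1}\mathcal L=2\alpha^{-1}-c_0\int_0^1\cdot\,\d\eta$ is symmetric on $\rL^2(0,1)$. By Cauchy--Schwarz it is coercive, since $c_0\int_0^1\alpha=\int_0^1\beta=1<2$. Moreover $-\mathcal L^{-1}\alpha\Delta=M^{-1}(-\Delta)$ is self-adjoint and nonnegative for $\langle M\cdot,\cdot\rangle$, and $M$ commutes with horizontal derivatives.

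\textbf{The function-space setting.} This issue comes from the setting of the lemma; the paper's proof makes the same claim. Neither $-\alpha\Delta$ nor the Lam\'e-type operator with domain $\rH^4_\rN$ is sectorial on the ground space $\rH^2(\Omega)$ without boundary condition. Already for $\lambda u-u''=G$ on $(0,1)$ with $u'(0)=u'(1)=0$ and $G'(0)\neq 0$, the boundary layer $\lambda^{-3/2}G'(0)e^{-\sqrt\lambda z}$ gives $\lambda\|u\|_{\rH^2}\gtrsim\lambda^{1/4}$ for fixed $G$.

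Correspondingly, a solution in $\E_1(0,\tau)$ forces $\dz g_2|_{z=0}=-\mu\alpha(0)\,\dz^3V|_{z=0}+(\text{smoother traces})$. That right-hand side has positive regularity in time, whereas a general $g_2\in\rL^2(0,\tau;\rH^2(\Omega))$ does not. So de Simon's theorem cannot be invoked as you state it. You need either compatibility conditions on the forcing or a lower regularity scale, e.g.\ an $\rH^1$-based ground space.

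Your own formula $\dz\mathcal L^{-1}g=\tfrac12(\dz g+\beta'\bar g)$ shows that $\mathcal L^{-1}$ does \emph{not} preserve $\rH^s_\rN$, contrary to your sentence. Hence the natural compatibility condition for the temperature equation is on $\mathcal L^{-1}g_3$, not on $g_3$.
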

\begin{proof}
  The main technical difficulty of the argument lies in the temperature equation, since it contains the nonlocal operator $\mathcal{L}$ acting on the time derivative $\dt \Theta$. For this reason, we divide the proof into two steps: first we analyze the temperature equation in isolation, and then we turn to the full coupled system.
   \begin{step}{\emph{The Temperature Equation}} \mbox{}\\
 We introduce the operator $\mathcal{P}$ on $\rH^{2}(\Omega)$ by
\begin{equation*}
    (\mathcal{P}f)(\cdot,z) := \beta(z)\int_{0}^{1} f(\cdot,\eta)\,\d\eta,
    \qquad f\in \rH^{2}(\Omega),
\end{equation*}
where $\beta\in \rC^\infty([0,1])$ satisfies $\int_{0}^{1}\beta(z)\,\d z = 1$. A direct computation shows that $\mathcal{P}$ is idempotent, namely,
\begin{equation*}
(\mathcal{P}^2 f)(\cdot,z)
= \beta(z)\int_0^1 (\mathcal{P}f)(\cdot,\eta)\,\d\eta
= \beta(z)\int_0^1 \beta(\eta)\Big(\int_0^1 f(\cdot,\zeta)\,\d\zeta\Big)\,\d\eta
= (\mathcal{P}f)(\cdot,z),
\end{equation*}
hence $\mathcal{P}$ is a projection on $\rH^2(\Omega)$. Moreover, $\mathcal{P}$ is bounded, since $\beta$ is bounded and vertical integration is a bounded operation on $\rH^2(\Omega)$. With this notation we can write
\begin{equation*}
    \mathcal{L} = 2\,\mathrm{Id}-\mathcal{P},
\end{equation*}
so that $\mathcal{L}$ is invertible with
\begin{equation*}
    \mathcal{L}^{-1}=\frac12\big(\mathrm{Id}+\mathcal{P}\big).
\end{equation*}
In particular, $\mathcal{L}^{-1}$ has the two-point spectrum $\{1/2,1\}$. Therefore, for every bounded holomorphic function $f$ defined on a domain containing $\sigma(\mathcal{L}^{-1})$, the holomorphic functional calculus yields
\begin{equation*}
    f(\mathcal{L}^{-1}) = f(1)\,\mathcal{P} + f(1/2)\,(\mathrm{Id}-\mathcal{P}).
\end{equation*}
Consequently, $\mathcal{L}^{-1}$ admits a bounded $\Hinfty$-calculus on $\rH^2(\Omega)$ with $\Phi^\infty_{\mathcal{L}^{-1}}=0$, and hence it is $\mathcal{R}$-sectorial with $\Phi^\mathcal{R}_{\mathcal{L}^{-1}}=0$.

With these preparations, we apply $\mathcal{L}^{-1}$ to the temperature equation after modifying by a positive shift. This gives
\begin{equation}\label{eq: Temp with L}
    \dt T + \mathcal{L}^{-1} \big (-\alpha \,\Delta  + \omega \big ) T -\omega \mathcal{L}^{-1} T=\mathcal{L}^{-1}\big ( g_3 + \varrhobar^\ast\alpha \, \mathcal{I}_z ( \divH V)-\Theta^\ast \mathrm{exp}(z/\Theta^\ast)  \;
\mathcal{I}_1 (\divH V)\big ).
\end{equation}
Our goal is to show that $\mathcal{L}^{-1}(-\alpha\,\Delta+\omega)$ is $\mathcal{R}$-sectorial on $\rH^2(\Omega)$ with angle strictly less than $\pi/2$. We treat it as a product of the (in general non-commuting) operators $\mathcal{L}^{-1}$ and $-\alpha\,\Delta+\omega$.
We first recall that
\begin{equation*}
   -\alpha\, \Delta + \omega \colon \rH^2(\Omega) \to \rH^2(\Omega),
   \qquad \D(-\alpha\, \Delta + \omega)= \rH^4_\rN (\Omega),
\end{equation*}
is $\mathcal{R}$-sectorial on $\rH^2(\Omega)$, since $\alpha$ is smooth and bounded away from $0$. More precisely, $\mathcal{R}$-sectoriality is known on the half-space and extends to the layer $\Omega_L=\R^2\times(0,1)$ by localization, cf.\ \cite{DHP:03}. Passing to $\Omega=\T^2\times(0,1)$ is then achieved by extending periodic functions to $\R^2$, as explained on p.~1082 in \cite{HK:16}.
Since $\mathcal{L}^{-1}$ is bounded on $\rH^2(\Omega)$, we define
\begin{equation*}
    S := AB \colon \rH^2(\Omega) \to \rH^2(\Omega),
    \qquad \D(S):=\rH^4_\rN(\Omega),
    \qquad A=\mathcal{L}^{-1},\ \ B=-\alpha\,\Delta+\omega.
\end{equation*}
To verify $\mathcal{R}$-sectoriality for $S$ (up to a shift), we consider the commutator expression
\begin{equation*}
    Z_{A,B}(\lambda,\mu)
    :=\big[A(\mu+B)^{-1}-(\mu+B)^{-1}A\big](\lambda+A)^{-1},
\end{equation*}
and show the estimate
\begin{equation}\label{eq: commutator}
      \| Z_{A,B} (\lambda,\mu) \|
      \leq \frac{C}{(1+|\lambda|)^{1-a} |\mu|^{1+b}},
      \ \text{ for all }\ (\lambda,\mu) \in \Sigma_{\pi} \times \Sigma_{\pi-\Phi^\mathcal{R}_B},
\end{equation}
with $C,a,b\ge 0$ and $a+b<1$. This allows us to apply \cite[Theorem~1.1]{Weber:98} and obtain sectoriality of $S$ up to a shift. Since $\rH^2(\Omega)$ is a Hilbert space, sectoriality and $\mathcal{R}$-sectoriality coincide, cf.\ \cite[Chapter~4]{PS:16}.
Indeed, since $A$ is boundedly invertible with spectrum $\{1/2,1\}$ and $\mathcal{R}$-sectorial on $\rH^2(\Omega)$ of angle $0$, there exists $C_A>0$ such that
\begin{equation*}
      \| (\lambda + A)^{-1} \| \leq \frac{C_A}{1+ | \lambda|},
      \ \text{ for all } \ \lambda \in \Sigma_\pi.
\end{equation*}
Likewise, $\mathcal{R}$-sectoriality of $B$ with $\Phi^\mathcal{R}_B<\pi/2$ yields $C_B>0$ such that
\begin{equation*}
        \| (\mu + B)^{-1} \| \leq \frac{C_B}{ | \mu|},
        \ \text{ for all } \ \mu \in \Sigma_{\pi-\Phi^\mathcal{R}_B}.
\end{equation*}
Writing $R_\mu=(\mu+B)^{-1}$, we obtain the crude commutator bound
\begin{equation*}
      \|AR_\mu-R_\mu A\| \le 2\|A\|\,\|R_\mu\|,
\end{equation*}
so that \eqref{eq: commutator} holds with $a=b=0$. Finally, the term $\omega A$ is of lower order and can be treated as a perturbation. Hence, by \cite[Proposition~4.3]{DHP:03}, there exists a shift $\omega_1\ge 0$ such that $S-\omega A+\omega_1$ is $\mathcal{R}$-sectorial on $\rH^2(\Omega)$ with angle strictly less than $\pi/2$.

   \end{step}
   \begin{step}{\emph{The full Operator Matrix}}\mbox{}\\
   In this step, we consider the full abstract Cauchy problem corresponding to the linear system \eqref{eq: full CPE Lagrange linear} which is given by 
   \begin{equation*}
       \dt \mathbf{u} + A \mathbf{u}= \mathbf{g},
   \end{equation*}
   where $\mathbf{u}= (\xi, V, T)^\top$, $\mathbf{g}= (g_1,g_2,g_3)^\top$ and the operator matrix $A$ is defined by 
   \begin{equation*}
       A \colon \rX_0 \to \rX_0, \quad \D(A) := \rX_1
   \end{equation*}
   and 
   \begin{equation*}
       A = \begin{pmatrix}
           0 & - \varrhobar^\ast \, \mathcal{I}_1 ( \divH) & 0 \\
            \Theta^\ast\frac{\nablaH }{\varrhobar^\ast} & - \mu \, \alpha\, \Delta - \mu' \, \alpha\, \nablaH \divH & \mathcal{A} \\ 
           0 &- \mathcal{L}^{-1}\big ( \varrhobar^\ast \alpha \,\mathcal{I}_z (\divH ) - \Theta^\ast \mathrm{exp}(z/\Theta^\ast) \mathcal{I}_1 (\divH)\big ) & - \mathcal{L}^{-1}( \alpha \, \Delta -\omega ) - \omega \mathcal{L}^{-1} 
       \end{pmatrix}
   \end{equation*}
To show that up to a shift the operator matrix $A$ is $\mathcal{R}$-sectorial on $\rX_0$ of angle strictly less than $\pi/2$, we split the matrix into a main part $A_0$ and a perturbative part $B$, that is, $A=A_0 +B$, where the respective parts are given by 
\begin{equation*}
A_0:=\begin{pmatrix}
      0 & - \varrhobar^\ast \, \mathcal{I}_1 ( \divH) & 0 \\
          0 & - \mu \, \alpha\, \Delta - \mu' \, \alpha\, \nablaH \divH & 0 \\ 
           0 &0 & - \mathcal{L}^{-1}( \alpha \, \Delta -\omega ) - \omega \mathcal{L}^{-1} 
\end{pmatrix} \ \text{ and } \ B:= \begin{pmatrix}
      0 &0 & 0 \\
           \Theta^\ast\frac{\nablaH }{\varrhobar^\ast} &0 & \mathcal{A} \\ 
           0 & \mathcal{C} &0
\end{pmatrix}
\end{equation*}
with $\mathcal{C}:= - \mathcal{L}^{-1}\big ( \varrhobar^\ast \alpha \,\mathcal{I}_z (\divH ) - \Theta^\ast \mathrm{exp}(z/\Theta^\ast) \mathcal{I}_1 (\divH)\big )$. Note that using $\sup_{z \in [0,1]}| \hat{B}(\Theta^\ast)(z) | \leq M$ and Jensen's inequality we obtain 
\begin{equation*}
    \| \varrhobar^\ast \mathcal{I}_1 (\divH V) \|_{\rH^3(\T^2)} \leq C \big \| \int_0^1 [ \hat{B}(\Theta^\ast) \divH V \big ] (\cdot, \eta ) \d \eta \big \|_{\rH^3(\T^2)} \leq C \int_0^1 \| V(\cdot,\eta) \|_{\rH^4(\T^2)} \d \eta \leq \| V \|_{\rH^4(\Omega)}
\end{equation*}
and therefore $ - \varrhobar^\ast \, \mathcal{I}_1 ( \divH)$ is a bounded operator from $\rH^4(\Omega)$ to $\rH^3(\T^2)$. Next, we note that the differential operator $-\mu\,\alpha\,\Delta-\mu'\,\alpha\,\nablaH\divH$ coincides with the \emph{hydrostatic Lam\'e operator} introduced in \cite{HIRZ:25}. It is shown there that, after a suitable shift, this operator admits a bounded $\Hinfty$-calculus on $\rL^q(\Omega)$ with $\Hinfty$-angle strictly smaller than $\pi/2$. The same strategy can be adapted to the present setting and yields the corresponding statement on the ground space $\rH^2(\Omega)$. In particular, there exists a shift $\omega_2\ge 0$ such that $-\mu\,\alpha\,\Delta-\mu'\,\alpha\,\nablaH\divH+\omega_2$ is $\mathcal{R}$-sectorial on $\rH^2(\Omega)$ with angle strictly less than $\pi/2$.
Combining this with the conclusion of Step~1, we set $\omega_0\coloneqq \max\{\omega_1,\omega_2\}$ and obtain that the operator matrix $A_0+\omega_0$ is $\mathcal{R}$-sectorial on $\rX_0$ with angle strictly less than $\pi/2$.

To obtain the corresponding statement for the full operator matrix $A$, we estimate the perturbation $B$ in the natural ground space. More precisely, for every $\mathbf{u}\in\D(A)$ we have
\begin{equation*}
   \| B \mathbf{u} \|_{\rX_0} \leq C \big ( \| \nablaH \xi \|_{\rH^2(\Omega)} + \| \mathcal{A}T \|_{\rH^2(\Omega)} + \| \mathcal{C}V \|_{\rH^2(\Omega)} \big ).
\end{equation*}
Since $\xi$ is independent of the vertical variable, it follows immediately that $\| \nablaH \xi \|_{\rH^2(\Omega)} \leq C \| \xi \|_{\rH^3(\T^2)}$. 
Moreover, using the definition of $\mathcal{A}$ and the boundedness of the operator $(D\hat{B})(\Theta^\ast)$, we obtain
\begin{equation*}
    \| \mathcal{A}T \|_{\rH^2(\Omega)} = \big \|  \Big ( \Theta^\ast\frac{ (D\hat{B})(\Theta^\ast)}{\hat{B}(\Theta^\ast)} + \mathrm{I}_2\Big ) \nablaH T \big\|_{\rH^2(\Omega)} \leq C \| \nablaH T \|_{\rH^2(\Omega)} \leq C\| T \|_{\rH^3(\Omega)}. 
\end{equation*}
Finally, invoking the boundedness of $\mathcal{L}^{-1}$ from Step~1, the uniform boundedness of the coefficients $\alpha$ and $\beta$, and Jensen's inequality, we infer
\begin{equation*}
    \begin{aligned}
        \| \mathcal{C} V\|_{\rH^2(\Omega)} &= \|  - \mathcal{L}^{-1}\big ( \varrhobar^\ast \alpha \,\mathcal{I}_z (\divH ) - \Theta^\ast \mathrm{exp}(z/\Theta^\ast) \mathcal{I}_1 (\divH)\big ) V \|_{\rH^2(\Omega)} \\ 
        &\leq C\Big ( \big \| \int_0^z  [ \hat{B}(\Theta^\ast) \divH V \big ] (\cdot, \eta ) \d \eta \big \|_{\rH^2(\Omega)} + \big \| \int_0^1 [ \hat{B}(\Theta^\ast) \divH V \big ] (\cdot, \eta ) \d \eta \big \|_{\rH^2(\Omega)} \Big ) \\
        &\leq C \Big ( \int_0^z \d \eta \;\| \divH V\|_{\rH^2(\Omega)} + \int_0^1 \d \eta \; \| \divH V \|_{\rH^2(\Omega)} \Big )
        \\ 
        &\leq C \| V \|_{\rH^3(\Omega)}.
    \end{aligned}
\end{equation*}
Combining these bounds yields
\begin{equation*}
 \begin{aligned}
         \| B \mathbf{u} \|_{\rH^3(\T^2)\times \rH^2(\Omega)\times \rH^2(\Omega)} 
         &\leq C \big ( \| \xi \|_{\rH^3(\T^2)} + \| T \|_{\rH^3(\Omega)} + \| V \|_{\rH^3(\Omega)} \big ).
 \end{aligned}
\end{equation*}
Using interpolation and Young's inequality, we obtain for every $\eps>0$ the estimate
\begin{equation*}
 \begin{aligned}
         \| B \mathbf{u} \|_{\rX_0} 
         &\leq \eps \big ( \| T \|_{\rH^4(\Omega)} + \| V \|_{\rH^4(\Omega)} \big ) + C(\eps) \big ( \| \xi \|_{\rH^3(\T^2)} + \| T \|_{\rH^2(\Omega)} + \| V \|_{\rH^2(\Omega)} \big ) \\ 
         & \leq \eps \| A_0  \mathbf{u}\|_{\rX_0} + C(\eps) \| \mathbf{u} \|_{\rX_0}.
 \end{aligned}
\end{equation*}
Hence $A$ can be viewed as a relatively $A_0$-bounded perturbation. Standard perturbation theory (see, for example, \cite[Proposition~4.3]{DHP:03}) therefore yields the existence of $\omega \geq 0$ such that $A + \omega$ is $\mathcal{R}$-sectorial on ${\rX_0}$ with $\Phi^\mathcal{R}_{A+\omega} < \pi /2$. The result now follows since $\mathcal{R}$-sectoriality is equivalent to maximal regularity. In particular, the initial data stems from the real interpolation space $(\rX_0,\rX_1)_{1/2, 2}$ where $(\cdot, \cdot)_{\theta,p}$ denotes the interpolation functor for $\theta \in (0,1)$ and $p \in (1,\infty)$. Specifically, $(\rX_0,\rX_1)_{1/2, 2}= \rH^3(\T^2) \times \rH^3_\rN(\Omega)^2 \times \rH^3_\rN(\Omega)$. 

   \end{step}

\end{proof}
\noindent
The last remaining step toward global existence, under the assumption that the solution remains close to the constant equilibrium determined by $\varrhobar^\ast$ and $\Theta^\ast$, is to control the nonlinear remainder terms $(f_1,f_2,f_3)$ generated by the Lagrangian transformation introduced in \autoref{sec: Lagrange}. This is carried out in the following lemma.
\begin{lem}\label{lem: est nonlinear}
    Let $\tau >0$ and $\varrhobar^\ast, \Theta^\ast >0$. Assume that 
    \begin{equation*}
      (  \varrhobar^\rL, v^\rL ,\Theta^\rL ) \in \mathbb{B}_\eps(0,\tau) := \{  (  \varrhobar^\rL, v^\rL ,\Theta^\rL )  \in \E_1(0,\tau) \colon \|  (  \varrhobar^\rL, v^\rL ,\Theta^\rL ) \|_{\E_1(0,\tau)} \leq \eps \}.
    \end{equation*}
    Then there is a constant $C=C(\tau, \varrhobar^\ast, \Theta^\ast, \Omega)>0$ such that 
    \begin{equation*}
        \| (f_1,f_2,f_3) (\varrhobar^\rL, v^\rL, \Theta^\rL) \|_{\E_0(0,\tau)} \leq C\eps^2.
    \end{equation*}
\end{lem}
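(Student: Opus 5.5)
The estimate follows from the fact that every term of $(f_1,f_2,f_3)$ is at least quadratic in the perturbation $(\varrhobar^\rL,v^\rL,\Theta^\rL)$, i.e.\ a product of a ``small'' factor and a ``top-order'' factor. The plan is to split each term into two such factors and bound them in complementary norms of $\E_1(0,\tau)$. The small factors are $\varrhobar^\rL$, $\Theta^\rL$, $\delta\hat{B}$, $\delta(D\hat{B})$, $\mathrm{Z}-\mathrm{I}_2$, $\partial_{y}\mathrm{Z}$ and $v^\rL-b^\rL$. Throughout I use the embeddings
\[
\E_1(0,\tau)\hookrightarrow \rL^\infty(0,\tau;\rH^3(\T^2)\times\rH^3(\Omega)^2\times\rH^3(\Omega)),\qquad \rH^3(\Omega)\hookrightarrow \rW^{1,\infty}(\Omega),
\]
and the algebra property of $\rH^2(\Omega)$ (and of $\rH^3(\T^2)$). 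The constants depend only on $\tau$, via the trace/interpolation embedding $\rH^1(0,\tau;\rX_0)\cap\rL^2(0,\tau;\rX_1)\hookrightarrow \rC([0,\tau];(\rX_0,\rX_1)_{1/2,2})$.

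First I collect the bounds on the small coefficients. Since $\|(\varrhobar^\rL,v^\rL,\Theta^\rL)\|_{\E_1}\le\eps$, the hypotheses of \autoref{lem:ests of trafo} hold (transforming back via $\rX$ costs only constants, by the same lemma). This gives $\|\mathrm{Z}-\mathrm{I}_2\|_{\rL^\infty(\rH^3)}+\|\partial_y\mathrm{Z}\|_{\rL^\infty(\rH^2)}\le C\eps$ and $\|\mathrm{Z}\|_{\rL^\infty(\rH^3)}\le C$. The Lipschitz bounds of Section~\ref{sec: Lagrange} give $\|\delta\hat{B}\|_{\E^\Theta_1}\le C\eps$, hence $\|\delta\hat{B}\|_{\rL^\infty(\rH^3)}\le C\eps$. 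They also give $\|\delta(D\hat{B})\|_{\mathrm{op}}\le C\eps$; here I need the operator norm on $\rH^2(\Omega)$ and $\rH^3(\Omega)$, which follows since $\delta(D\hat B)$ consists of multiplications by $\rH^3$-functions and vertical integrations. The coefficients $\hat{B}(\Theta^\ast)$, $\alpha$ and $\beta$ are smooth and bounded above and below in $z$, so dividing by $\varrhobar^\ast\hat{B}(\Theta^\ast)$ is harmless. For $b^\rL$ I use Jensen as in Step~2 of \autoref{lem: linear max reg}: $\|b^\rL\|_{\rL^\infty(\rH^3)}+\|b^\rL\|_{\rL^2(\rH^4)}\le C\eps$.

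Next I estimate in $\rL^2(0,\tau;\rH^2(\Omega))$ (resp.\ $\rL^2(0,\tau;\rH^3(\T^2))$ for $f_1$), term by term, always placing the small factor in $\rL^\infty_t$ and the top-order factor in $\rL^2_t$.
\begin{itemize}
\item[(i)] \emph{Time-derivative terms} such as $(\varrhobar^\rL/\varrhobar^\ast+\delta\hat{B}/\hat{B}(\Theta^\ast))\dt v^\rL$: bound by $\|\cdot\|_{\rL^\infty(\rH^2)}\|\dt v^\rL\|_{\rL^2(\rH^2)}\le C\eps^2$.
\item[(ii)] \emph{Second-order terms} from the transformed Laplacian, e.g.\ $\partial^2 v^\rL(\mathrm{Z}-\mathrm{I}_2)\mathrm{Z}$: bound by $\|\mathrm{Z}-\mathrm{I}_2\|_{\rL^\infty(\rH^3)}\|v^\rL\|_{\rL^2(\rH^4)}$. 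The terms $\partial v^\rL\,\partial\mathrm{Z}$ are handled the same way.
\item[(iii)] \emph{Gradient and transport terms} like $(\mathrm{Z}-\mathrm{I}_2)\nablaH\varrhobar^\rL$, $\Theta^\rL\mathrm{Z}^\top\nablaH\Theta^\rL$ and $\mathrm{Z}(v^\rL-b^\rL)\cdot\nablaH v^\rL$: products of $\rL^\infty(\rH^3)$ quantities, which are even in $\rL^\infty(\rH^2)$.
\item[(iv)] \emph{Terms in $f_1$}: only horizontal norms are needed. For instance $\|\varrhobar^\rL\divH b^\rL\|_{\rL^2(\rH^3(\T^2))}\le C\|\varrhobar^\rL\|_{\rL^\infty(\rH^3)}\|b^\rL\|_{\rL^2(\rH^4)}$, and the vertically integrated terms are treated by Jensen.
\end{itemize}

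The main obstacle is the vertical-velocity terms: $(\varrhobar\hat{B}w)^\rL\dz v^\rL$ in $f_2$, and in $f_3$ the terms $J_2$, $\dz J_2$, $\Theta^\rL\dz J_1$ and $\dz\Theta^\rL J_1$. Here I first show that $J_1$ and $(\varrhobar\hat{B}w)^\rL$ satisfy $\|\cdot\|_{\rL^2(0,\tau;\rH^2(\Omega))}\le C\eps$. The point is that they contain $\dt\Theta^\rL$ and $\divH v^\rL$ only inside $\int_0^z$, so $\dt\Theta^\rL\in\rL^2(\rH^2)$ and $\nablaH v^\rL\in\rL^2(\rH^3)$ suffice; $\dz J_1$ is simply the integrand, again in $\rL^2(\rH^2)$. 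I then multiply by $\dz v^\rL$, $\Theta^\rL$ or $\dz\Theta^\rL$, all in $\rL^\infty(\rH^2)$, to get $C\eps^2$. For $J_2$ and $\dz J_2$ each integrand is itself quadratic, of the types (i)--(iii), so $\|J_2\|_{\rL^2(\rH^2)}+\|\dz J_2\|_{\rL^2(\rH^2)}\le C\eps^2$. The one delicate point is the term with $\delta(D\hat{B})\dt\Theta^\rL$, which requires the operator bound for $\delta(D\hat B)$ on $\rH^2$ rather than $\rH^3$; this is where I would check the mapping properties most carefully. Summing all contributions yields $\|(f_1,f_2,f_3)\|_{\E_0(0,\tau)}\le C\eps^2$.
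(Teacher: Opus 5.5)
Your proposal is correct and follows essentially the same route as the paper: bound the small coefficients $\mathrm{Z}-\mathrm{I}_2$, $\partial_y\mathrm{Z}$, $\delta\hat{B}$, $\delta(D\hat{B})$ and $b^\rL$ via \autoref{lem:ests of trafo} and the Lipschitz estimates of Section~\ref{sec: Lagrange}, estimate every term of $(f_1,f_2,f_3)$ as a product with the small factor in $\rL^\infty_t$ and the top-order factor in $\rL^2_t$, and handle the vertical-velocity contributions through $\|J_1\|+\|\dz J_1\|\le C\eps$ and $\|J_2\|+\|\dz J_2\|\le C\eps^2$ in $\rL^2(0,\tau;\rH^2(\Omega))$. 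One cleanup concerns your parenthetical about ``transforming back'', which is circular as phrased; instead use $\dt\rX=b^\rL$ with $b^\rL=\int_0^1\hat{B}(\Theta^\rL+\Theta^\ast)\,v^\rL\d\eta$, which is written entirely in Lagrangian variables, so the bounds on $\nablaH\rX-\mathrm{I}_2$ and $\mathrm{Z}$ follow directly from your estimate of $\nablaH b^\rL$ without the Eulerian hypotheses of \autoref{lem:ests of trafo}.
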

\begin{proof}
We begin by collecting a few auxiliary estimates that will be used repeatedly in the sequel. First, we note the continuous embedding
\begin{equation*}
    \E_1(0,\tau) \hookrightarrow \rL^\infty \bigl(0,\tau;\, \rH^3(\T^2)\times \rH^3(\Omega)^2 \times \rH^3(\Omega)\bigr),
\end{equation*}
which will be invoked whenever needed.
From the discussion in \autoref{sec: Lagrange} it readily follows that 
\begin{equation*}
    \| \delta \hat{B}(\Theta^\rL, \Theta^\ast) \|_{\E^\Theta_1(0,\tau)} + \| \delta (D \hat{B})(\Theta^\rL,\Theta^\ast) \|_{\mathrm{op}} \leq C \eps 
\end{equation*}
for a constant $C=C(\tau, \Theta^\ast, \Omega)>0$. In particular, we infer from \eqref{eq: divH b} that 
\begin{equation*}
  \begin{aligned}
        \| \divH b^\rL \|_{\rL^2_\tau \rH^3(\T^2)} &\leq C \big ( \| D\hat{B}(\Theta^\ast)(\nablaH \Theta^\rL) \|_{\rL^2_\tau \rH^3(\T^2)} + \| \delta (D \hat{B})(\Theta^\rL,\Theta^\ast) \|_{\mathrm{op}} \big ) \| v^\rL \|_{\E^v_1(0,\tau)} \\ 
        &\quad+ C \big ( \| \hat{B}(\Theta^\ast) \|_{\E^\Theta_1(0,\tau)} + \| \delta \hat{B}(\Theta^\rL, \Theta^\ast) \|_{\E^\Theta_1(0,\tau)} \big ) \| v^\rL \|_{\E^v_1(0,\tau)} \\
        &\leq C \eps + C\eps^2,
  \end{aligned}
\end{equation*}
where we used that the underlying space $\rH^2(\T^2)$ is an algebra with respect to pointwise multiplication. Next, we address the terms involving the vertical velocity contribution $(\varrhobar\,\hat{B}\,w)^{\rL}$. More precisely, we estimate the quantities $J_1$ and $J_2$ introduced in \eqref{eq J_1 and J_2}. For $J_1$ we obtain
\begin{equation*}
    \begin{aligned}
       \| J_1 \|_{\rL^2_\tau \rH^2(\Omega)}
       &\leq  \Big \| \int_0^z \big [(D\hat{B})(\Theta^\ast)\,(\dt \Theta^\rL) + \hat{B}(\Theta^\ast)\,\divH v^\rL  \big ] (\cdot , \eta)\, \d \eta \Big \|_{\rL^2_\tau \rH^2(\Omega)} \\
       &\quad  + \Big \| \Big(\int_0^z \hat{B}(\Theta^\ast)(\cdot,\eta)\,\d\eta\Big) 
       \Big(\int_0^1\big [ \hat{B}(\Theta^\ast)\, \divH v^\rL\big ](t,\cdot,\zeta)\,\d\zeta\Big) \Big \|_{\rL^2_\tau \rH^2(\Omega)} \\
       & \leq C \| (D\hat{B})(\Theta^\ast)\,(\dt \Theta^\rL) + \hat{B}(\Theta^\ast)\,\divH v^\rL \|_{\rL^2_\tau \rH^2(\Omega)}
       \leq C\eps .
    \end{aligned}
\end{equation*}
By the same arguments, using additionally \autoref{lem:ests of trafo}, we infer
\begin{equation*}
    \| J_2 \|_{\rL^2_\tau \rH^2(\Omega)}  \leq C \eps^2 .
\end{equation*}
In particular, $J_1$ and $J_2$ are vertically integrated and therefore their respective vertical derivatives can be estimated the same way, that is, 
\begin{equation*}
    \| \dz J_1 \|_{\rL^2_\tau \rH^2(\Omega)} \leq C\eps \ \text{ and } \ \| \dz J_2 \|_{\rL^2_\tau \rH^2(\Omega)} \leq C\eps^2.
\end{equation*}
From the estimates on $J_1$ and $J_2$ we conclude that
\begin{equation*}
    \| (\varrhobar \, \hat{B} \, w)^{\rL}\|_{\rL^2_\tau \rH^2(\Omega)}
    \leq \| J_1 + J_2 \|_{\rL^2_\tau \rH^2(\Omega)}
    \leq C \eps  \ \text{ and }\  \| \dz(\varrhobar \, \hat{B} \, w)^{\rL}\|_{\rL^2_\tau \rH^2(\Omega)} \leq C\eps.
\end{equation*}
We now turn to the first contribution in the remainder term $f_1$. Estimating it in the same spirit as $\divH b^\rL$, we obtain
\begin{equation*}
    \begin{aligned}
      &\quad \Big \| \int_0^1 \Big[ \Big(\big (D\hat{B} + \delta ( D\hat{B}) \big ) ( \nablaH \Theta^\rL) \Big)\,v^\rL
      +  \delta \hat{B}\,\divH v^\rL \Big](\cdot,\eta)\, \d \eta \Big \|_{\rL^2_\tau \rH^3(\T^2)} \\ 
      &\leq C \Big( \| D\hat{B}(\Theta^\ast)(\nablaH \Theta^\rL) \|_{\rL^2_\tau \rH^3(\T^2)}
      + \| \delta (D \hat{B})(\Theta^\rL,\Theta^\ast) \|_{\mathrm{op}}
      + \| \delta \hat{B}(\Theta^\rL, \Theta^\ast) \|_{\E^\Theta_1(0,\tau)} \Big)\, \| v^\rL \|_{\E^v_1(0,\tau)} \\
      &\leq C \eps^2 .
    \end{aligned}
\end{equation*}
Finally, invoking \autoref{lem:ests of trafo}, we estimate the remaining contribution by
\begin{equation*}
    \begin{aligned}
          &\quad\big \|  \varrhobar^\rL \divH b^\rL
          + (\varrhobar^\rL + \varrhobar^\ast)\, \nablaH b^\rL : (\mathrm{Z}^\top  - \mathrm{I}_2 )\big \|_{\rL^2_\tau \rH^3(\T^2)} \\ 
          &\leq C \| \varrhobar^\rL \|_{\rL^\infty_\tau \rH^3(\T^2)} \| \divH b^\rL \|_{\rL^2_\tau \rH^3(\T^2)}
          + C \| \varrhobar^\rL + \varrhobar^\ast \|_{\rL^\infty_\tau \rH^3(\T^2)} \| \nablaH b^\rL \|_{\rL^2_\tau \rH^3(\T^2)} \| \mathrm{Z}^\top  - \mathrm{I}_2 \|_{\rL^\infty_\tau \rH^3(\T^2)} \\ 
          & \leq C\eps^2 ,
    \end{aligned}
\end{equation*}
which completes the desired estimate of $f_1$.
We next turn to $f_2$ and begin with the term
\begin{equation*}
    \begin{aligned}
        &\quad\Big \|  \Big (\frac{\varrhobar^\rL \,\delta\hat{B}}{\varrhobar^\ast \,\hat{B}(\Theta^\ast)}
        + \frac{\varrhobar^\rL}{\varrhobar^\ast}
        + \frac{\delta\hat{B}} {\hat{B}(\Theta^\ast)}\Big )\, \dt v^\rL \Big \|_{\rL^2_\tau \rH^2(\Omega)} \\
        & \leq C \Big (\| \varrhobar^\rL \|_{\rL^\infty_\tau \rH^2(\T^2)} \| \delta \hat{B} \|_{\rL^\infty_\tau \rH^2(\Omega)}
        + \| \varrhobar^\rL \|_{\rL^\infty_\tau \rH^2(\T^2)}
        + \| \delta \hat{B} \|_{\rL^\infty_\tau \rH^2(\Omega)} \Big ) \| \dt v^\rL \|_{\rL^2_\tau \rH^2(\Omega)} \\
        & \leq C\eps^2 .
     \end{aligned}
\end{equation*}
Moreover,
\begin{equation*}
    \begin{aligned}
        &\quad \Big \| \frac{(\varrhobar^\rL + \varrhobar^\ast)\, ( \hat{B}(\Theta^\ast) +\delta \hat{B}) \, \big ( \mathrm{Z}(v^\rL - b^\rL) \cdot \nablaH \big ) v^\rL}{\varrhobar^\ast \,\hat{B}(\Theta^\ast)} \Big \|_{\rL^2_\tau \rH^2(\Omega)} \\
        &\leq C \| \varrhobar^\rL + \varrhobar^\ast \|_{\rL^\infty_\tau \rH^2(\Omega)}
        \| \hat{B}(\Theta^\ast) + \delta \hat{B} \|_{\rL^\infty_\tau \rH^2(\Omega)}
        \| \mathrm{Z} \|_{\rL^\infty_\tau \rH^2(\Omega)}
        \big ( \| v^\rL \|_{\rL^\infty_\tau \rH^2(\Omega)} + \| b^\rL \|_{\rL^\infty_\tau \rH^2(\Omega)} \big )
        \| \nablaH v^\rL \|_{\rL^2_\tau \rH^2(\Omega)} \\
        &\leq C\eps^2 .
    \end{aligned}
\end{equation*}
The transformed viscous contributions are estimated by
\begin{equation*}
    \begin{aligned}
        &\quad \Big \| \frac{\mu}{\varrhobar^\ast \,\hat{B}(\Theta^\ast)}\Bigl ( \sum_{j,k,l} \frac{\partial^2 v_i^\rL}{\partial y_k \partial y_l} \bigl (\mathrm{Z}_{k,j} - \delta_{k,j}\bigr ) \mathrm{Z}_{l,j}
        + \sum_{k,l}\frac{\partial^2 v_i^\rL}{\partial y_k \partial y_l}\bigl (\mathrm{Z}_{l,k}- \delta_{l,k}\bigr )
        +  \sum_{j,k,l} \mathrm{Z}_{l,j} \frac{\partial v_i^\rL}{\partial y_k} \frac{\partial \mathrm{Z}_{k,j}}{\partial y_l} \Bigr ) \Big \|_{\rL^2_\tau \rH^2(\Omega)} \\
        &\leq C \| v^\rL \|_{\rL^2_\tau \rH^4(\Omega)}\Big (  \| \mathrm{Z}- \mathrm{I}_2 \|_{\rL^\infty_\tau \rH^2(\T^2)} \| \mathrm{Z} \|_{\rL^\infty_\tau \rH^2(\T^2)}
        + \| \mathrm{Z}- \mathrm{I}_2 \|_{\rL^\infty_\tau \rH^2(\T^2)}
        + \| \mathrm{Z} \|_{\rL^\infty_\tau \rH^2(\T^2)} \Bigl\| \frac{\partial \mathrm{Z}_{l,j}}{\partial y_k} \Bigr\|_{\rL^\infty_\tau\rH^2(\T^2)} \Big ) \\
        &\leq C\eps^2 .
 \end{aligned}
\end{equation*}
In exactly the same way,
\begin{equation*}
    \begin{aligned}
        &\quad \Big \| \frac{\mu'}{\varrhobar^\ast \,\hat{B}(\Theta^\ast)}
        \Bigl (\sum_{l,k,j} \frac{\partial^2 v^\rL_j}{\partial y_k \partial y_l}\bigl ( \mathrm{Z}_{l,i}- \delta_{l,i}\bigr )\mathrm{Z}_{k,j}
        +  \sum_{k,l}\frac{\partial^2 v^\rL_l}{\partial y_k \partial y_l} \bigl ( \mathrm{Z}_{l,i}- \delta_{l,i}\bigr )
        +  \sum_{j,k,l} \mathrm{Z}_{l,i} \frac{\partial v^\rL_j}{\partial y_k} \frac{\partial \mathrm{Z}_{k,j}}{\partial y_l} \Bigr )\Big \|_{\rL^2_\tau \rH^2(\Omega)} \\
        &\leq C \| v^\rL \|_{\rL^2_\tau \rH^4(\Omega)}\Big (  \| \mathrm{Z}- \mathrm{I}_2 \|_{\rL^\infty_\tau \rH^2(\T^2)} \| \mathrm{Z} \|_{\rL^\infty_\tau \rH^2(\T^2)}
        + \| \mathrm{Z}- \mathrm{I}_2 \|_{\rL^\infty_\tau \rH^2(\T^2)}
        + \| \mathrm{Z} \|_{\rL^\infty_\tau \rH^2(\T^2)} \Bigl\| \frac{\partial \mathrm{Z}_{l,j}}{\partial y_k} \Bigr\|_{\rL^\infty_\tau\rH^2(\T^2)} \Big ) \\
        &\leq C\eps^2 .
    \end{aligned}
\end{equation*}
Finally, for the contribution involving the vertical velocity we have
\begin{equation*}
    \Big \|  \frac{(\varrhobar \, \hat{B} \, w)^{\rL} \, (\dz v^\rL)_i}{\varrhobar^\ast \,\hat{B}(\Theta^\ast)} \Big \|_{\rL^2_\tau \rH^2(\Omega)}
    \leq C \| (\varrhobar \, \hat{B} \, w)^{\rL} \|_{\rL^2_\tau \rH^2(\Omega)} \| \dz v^\rL \|_{\rL^\infty_\tau \rH^2(\Omega)}
    \leq C\eps^2 .
\end{equation*}
All remaining contributions stemming from the pressure gradient are of higher order and can be treated in the same manner.

We now turn to the temperature remainder $f_3$. We begin with the term arising from the vertical pressure work and estimate
\begin{equation*}
   \begin{aligned}
        &\quad \big \| \frac{ \dz \Theta^\rL J_1+ (\dz \Theta^\rL + \Theta^\ast + 1) J_2 +\Theta^\ast \dz J_2 + \Theta^\rL \dz J_1}{\varrhobar^\ast \,\hat{B}(\Theta^\ast)} \big \|_{\rL^2_\tau \rH^2(\Omega)} \\
        &\leq C \Big( \| \dz \Theta^\rL \|_{\rL^\infty_\tau \rH^2(\Omega)} \| J_1 \|_{\rL^2_\tau \rH^2(\Omega)}
        + \| \dz \Theta^\rL + \Theta^\ast + 1 \|_{\rL^\infty_\tau \rH^2(\Omega)} \| J_2 \|_{\rL^2_\tau \rH^2(\Omega)}
        + \| \dz J_2 \|_{\rL^2_\tau \rH^2(\Omega)}  \\
        &\qquad\qquad + \| \Theta^\rL \|_{\rL^\infty_\tau \rH^2(\Omega)} \| \dz J_1 \|_{\rL^2_\tau \rH^2(\Omega)} \Big)
        \leq C \eps^2 .
   \end{aligned}
\end{equation*}
The remaining terms in $f_3$ have the same structure as the corresponding contributions in $f_2$ and can therefore be estimated analogously. This completes the bounds for the nonlinear remainder terms.
\end{proof}
\noindent
Arguing along the lines of \autoref{lem: est nonlinear}, we can establish the following stability estimates for differences of solutions. Since the proof follows the same strategy, we omit the details.
\begin{cor}\label{cor: lipschitz nonlinear}
     Let $\tau >0$ and $\varrhobar^\ast, \Theta^\ast >0$. Assume that 
    \begin{equation*}
      (  \varrhobar^\rL_i, v^\rL_i ,\Theta^\rL_i ) \in \mathbb{B}_\eps(0,\tau) \ \text{ for } \ i =1,2.
    \end{equation*}
    Then there is a constant $C=C(\tau, \varrhobar^\ast, \Theta^\ast, \Omega)>0$ such that 
    \begin{equation*}
        \| (f_1,f_2,f_3) (\varrhobar^\rL_1, v^\rL_1, \Theta^\rL_1) -(f_1,f_2,f_3) (\varrhobar^\rL_2, v^\rL_2, \Theta^\rL_2)  \|_{\E_0(0,\tau)} \leq C\eps \|   (\varrhobar^\rL_1, v^\rL_1, \Theta^\rL_1) - (\varrhobar^\rL_2, v^\rL_2, \Theta^\rL_2)  \|_{\E_1(0,\tau)}.
    \end{equation*}
\end{cor}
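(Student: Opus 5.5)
The plan is to run the proof of \autoref{lem: est nonlinear} again for differences, reducing everything to one multilinear telescoping identity. Each remainder $f_k$ is a finite sum of products of the following factors:
\begin{itemize}
\item the unknowns $\varrhobar^\rL$, $v^\rL$, $\Theta^\rL$ and their derivatives;
\item the diagnostic quantities $\delta\hat{B}(\Theta^\rL,\Theta^\ast)$, $\delta(D\hat{B})(\Theta^\rL,\Theta^\ast)$, $b^\rL$, $\mathrm{Z}$ and $\nablaH \mathrm{Z}$, together with $J_1$, $J_2$ and $(\varrhobar\,\hat{B}\,w)^\rL$;
\item fixed bounded coefficients depending only on $z$, such as $\alpha$, $\hat{B}(\Theta^\ast)$ and $(D\hat{B})(\Theta^\ast)$.
\end{itemize}
Every such product has at least two factors that vanish at the equilibrium. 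For a product $P=a_1\cdots a_m$ I will use
\[
P^{(1)}-P^{(2)}=\sum_{j=1}^m a_1^{(2)}\cdots a_{j-1}^{(2)}\,\bigl(a_j^{(1)}-a_j^{(2)}\bigr)\,a_{j+1}^{(1)}\cdots a_m^{(1)}.
\]
In each summand, one factor is a difference and at least one of the other factors is small, of size $C\eps$ by \autoref{lem: est nonlinear} and \autoref{lem:ests of trafo}. I then estimate each summand exactly as in \autoref{lem: est nonlinear}: the same $\rL^\infty_\tau\rH^3$ versus $\rL^2_\tau\rH^4$ (or $\rH^1_\tau\rH^2$) placement, the algebra property of $\rH^2$ and $\rH^3$, and the embedding $\E_1\hookrightarrow \rL^\infty_\tau \rH^3$. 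This gives the factor $C\eps$ times the norm of the difference factor.

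It therefore suffices to prove Lipschitz bounds with an $O(1)$ constant for each diagnostic factor, in the norm in which it was used in \autoref{lem: est nonlinear}. Set $\Delta:=\|(\varrhobar^\rL_1-\varrhobar^\rL_2,v^\rL_1-v^\rL_2,\Theta^\rL_1-\Theta^\rL_2)\|_{\E_1(0,\tau)}$.
\begin{enumerate}
\item For $\delta\hat{B}$ and $\delta(D\hat{B})$, the local Lipschitz property of $\Theta\mapsto\hat{B}(\Theta)$ and $\Theta\mapsto D\hat{B}(\Theta)$ in the regime \eqref{eq: regime}, noted in \autoref{sec: Lagrange}, gives $\|\delta\hat{B}_1-\delta\hat{B}_2\|_{\E^\Theta_1}+\|\delta(D\hat{B})_1-\delta(D\hat{B})_2\|_{\mathrm{op}}\le C\Delta$.
\item For $b^\rL$, I apply the bilinear structure of $b^\rL$ and \eqref{eq: divH b}, which gives $\|\nablaH(b^\rL_1-b^\rL_2)\|_{\rL^2_\tau\rH^3}\le C\Delta$.
\item For $J_1$ and $J_2$, I use \eqref{eq J_1 and J_2}. $J_1$ is linear in the unknowns, so $\|J_{1,1}-J_{1,2}\|\le C\Delta$. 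Telescoping $J_2$ gives $\|J_{2,1}-J_{2,2}\|\le C\eps\Delta$, and the same bounds hold for $\dz J_1$ and $\dz J_2$ because both are vertical integrals.
\end{enumerate}

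The main obstacle is the transformation. Differences of $\mathrm{Z}$ in Lagrangian coordinates also involve the two distinct flows $\rX_1$ and $\rX_2$. The remainders $f_k$ are expressed entirely in Lagrangian variables, with $\mathrm{Z}=[\nablaH\rX]^{-1}$ and $\nablaH\rX=\mathrm{I}_2+\int_0^t\nablaH b^\rL\,\mathrm{Z}^{-1}\,\d s$. I therefore plan to treat $\nablaH\rX_i$ as the solution of the linear ODE $\dt\nablaH\rX=\nablaH b^\rL$ written in Lagrangian form, so that everything is evaluated on the same label variables. A Gronwall argument in $\rL^\infty_\tau\rH^3(\T^2)$, combined with the bound $\|\nablaH(b^\rL_1-b^\rL_2)\|_{\rL^2_\tau\rH^3}\le C\Delta$ from step 2, should give $\sup_t\|\nablaH\rX_1-\nablaH\rX_2\|_{\rH^3}\le C\tau^{1/2}\Delta$. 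For $\mathrm{Z}$ I then use the resolvent identity $\mathrm{Z}_1-\mathrm{Z}_2=\mathrm{Z}_1(\nablaH\rX_2-\nablaH\rX_1)\mathrm{Z}_2$ with the uniform bounds from \autoref{lem:ests of trafo}(b). Via \eqref{eq:ex of nablaH Y sea ice para-hyper} this also controls $\partial_{y_k}\mathrm{Z}$ in $\rL^\infty_\tau\rH^2$.

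Inserting these Lipschitz bounds into the telescoped remainders, term by term as in \autoref{lem: est nonlinear}, gives $\|(f_1,f_2,f_3)_1-(f_1,f_2,f_3)_2\|_{\E_0(0,\tau)}\le C\eps\Delta$. The factors $(\varrhobar\,\hat{B}\,w)^\rL\,\dz v^\rL$ and $\dz\Theta^\rL\,J_1$ are handled by placing the $O(\eps)$ factor in $\rL^\infty_\tau\rH^2$.
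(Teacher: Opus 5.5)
Your proposal is correct and follows the paper's own route. The paper proves this corollary only by saying one reruns the proof of \autoref{lem: est nonlinear} for differences, which is exactly your telescoping scheme combined with $O(1)$ Lipschitz bounds for $\delta\hat{B}$, $\delta(D\hat{B})$, $b^\rL$, $J_1$, $J_2$ and $\mathrm{Z}$. Your explicit treatment of $\mathrm{Z}_1-\mathrm{Z}_2$ supplies a detail the paper omits, and it can be simpler than you make it. Since $b^\rL$ depends only on the Lagrangian unknowns and $\dt\rX=b^\rL$, the gradient in the label variables satisfies $\nablaH\rX_i=\mathrm{I}_2+\int_0^t\nablaH b^\rL_i\,\d s$ exactly. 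The factor $\mathrm{Z}^{-1}$ appears only if $\nablaH b$ means the Eulerian gradient composed with $\rX$. Consequently $\sup_t\|\nablaH\rX_1-\nablaH\rX_2\|_{\rH^3(\T^2)}\le C\tau^{1/2}\Delta$ follows by integrating in time, with no Gronwall argument needed.
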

\noindent
\section{Proof of the Main Theorem}\label{sec: proof}
\noindent
Finally, in this section, we assemble all the ingredients and prove the main theorem by means of the contraction mapping principle.
\begin{proof}[Proof of \autoref{thm main}]
Let $\mathbb{B}_\eps(0,\tau)$ be the solution ball introduced in \autoref{lem: est nonlinear}. We define the solution operator $\Psi$ by
\begin{equation*}
    \Psi \colon \mathbb{B}_\eps(0,\tau) \to \E_1(0,\tau), \quad (\varrhobar^\rL_1, v^\rL_1,\Theta^\rL_1) \mapsto \Psi (\varrhobar^\rL_1, v^\rL_1,\Theta^\rL_1) := (\varrhobar^\rL, v^\rL, \Theta^\rL),
\end{equation*}
where $(\varrhobar^\rL, v^\rL, \Theta^\rL) \in \E_1(0,\tau)$ denotes the unique strong solution to the linearized problem \eqref{eq: full CPE Lagrange linear} with right-hand sides $(f_1,f_2,f_3)(\varrhobar^\rL_1, v^\rL_1,\Theta^\rL_1)$. 
By \autoref{lem: linear max reg} and the nonlinear bounds from \autoref{lem: est nonlinear}, the map $\Psi$ is well defined.

We next verify that $\Psi$ maps $\mathbb{B}_\eps(0,\tau)$ into itself and is a contraction, provided the initial data are sufficiently small. More precisely, we assume that
\begin{equation*}
    \| (\varrhobar^\rL_0 , v^\rL_0, \Theta^\rL_0) \|_{\rH^3(\T^2) \times \rH^3(\Omega)^2 \times \rH^3(\Omega)} \leq \frac{\eps}{2C_{\mathrm{lin}}},
\end{equation*}
where $C_{\mathrm{lin}}>0$ denotes the continuity constant from \autoref{lem: linear max reg}. Invoking \autoref{lem: est nonlinear}, we obtain
\begin{equation*}
  \begin{aligned}
        \| (\varrhobar^\rL, v^\rL, \Theta^\rL) \|_{\E_1(0,\tau)} 
        &\leq C_{\mathrm{lin}} \Big( \| (f_1,f_2,f_3)(\varrhobar^\rL_1, v^\rL_1,\Theta^\rL_1) \|_{\E_0(0,\tau)} 
        + \| (\varrhobar^\rL_0 , v^\rL_0, \Theta^\rL_0) \|_{\rH^3(\T^2) \times \rH^3(\Omega)^2 \times \rH^3(\Omega)} \Big) \\
        &\leq C \, C_{\mathrm{lin}} \eps^2 + \frac{\eps}{2}.
  \end{aligned}
\end{equation*}
Hence, choosing $\eps>0$ sufficiently small ensures that $\Psi$ is a self-map, i.e.\ $\Psi \colon \mathbb{B}_\eps(0,\tau) \to \mathbb{B}_\eps(0,\tau)$. The contraction property follows from \autoref{cor: lipschitz nonlinear}. 

Finally, since $\eps>0$ is chosen small, \autoref{lem:ests of trafo} implies that the transform $\rX$ is invertible with inverse $\mathrm{Y}$. Defining the pulled-back functions $(\varrhobar, v ,\Theta)$ by 
\begin{equation*}
    \varrhobar(t,\cdot) := \varrhobar^\rL (t,\mathrm{Y}(\cdot,t)), \ v(t,\cdot,z) := v^\rL (t,\mathrm{Y}(t,\cdot),z) \ \text{ and }\ \Theta(t,\cdot,z):= \Theta^\rL(t,\mathrm{Y}(t,\cdot),z),
\end{equation*}
and introducing the diagnostic quantities $w$, $p$ and $\varrho$ in terms of $(\varrhobar,v,\Theta)$ as described in \autoref{sec : prelim}: We then readily verify that $(\varrho, u=(v,w), \Theta)$ is the unique strong solution of the full compressible primitive equations \eqref{eq:full prim}.

\end{proof}

\end{document}